\newtheorem{theorem}{Theorem}[section]
\newtheorem{lemma}[theorem]{Lemma}
\theoremstyle{definition}
\theoremstyle{remark}
\numberwithin{equation}{section}
\newcommand{\Ker}{\text{Ker}\ }
\newcommand{\Span}{\text{span}}
\newcommand{\ad}{\text{ad}}
\newcommand{\Exp}{\text{exp}}
\newcommand{\red}{\text{red}}
\newcommand{\can}{\text{can}}
\newcommand{\Top}{\text{top}}
\newcommand{\Lie}{\text{Lie}}
\theoremstyle{remark}
\theoremstyle{plain}
\newtheorem{thm}{Theorem}[section]
\newtheorem{cor}[thm]{{\bf Corollary}}
\newtheorem{lem}[thm]{Lemma}
\newtheorem{claim}[thm]{Claim}
\newtheorem{prop}[thm]{Proposition}
\begin{document}

\title{Topological Entropy of Left-Invariant Magnetic Flows on 2-Step Nilmanifolds}

\author{Jonathan Epstein}
\address{Mathematics Department, Dartmouth College, Hanover, NH 03755}
\email{jonathan.m.epstein.gr@dartmouth.edu}

\date{October 22, 2015}

\keywords{Differential geometry, topological entropy, magnetic flows, nilmanifolds}

\begin{abstract}
In this paper, we consider magnetic flows on 2-step nilmanifolds $M = \Gamma \backslash G$, where the Riemannian metric $g$ and the magnetic field $\sigma$ are left-invariant. Our first result is that when $\sigma$ represents a rational cohomology class and its restriction to $\mathfrak{g} = T_eG$ vanishes on the derived algebra, then the associated magnetic flow has zero topological entropy.  In particular, this is the case when $\sigma$ represents a rational cohomology class and is exact.  Our second result is the construction of a magnetic field on a 2-step nilmanifold that has positive topological entropy for arbitrarily high energy levels.
\end{abstract}

\maketitle

\section{Introduction} \label{sec:Intro}
Let $M$ be a closed $n$-dimensional manifold endowed with a $C^\infty$ Riemannian metric $g$ and let $\sigma$ be any closed 2-form on $M$.  The data $(M,g,\sigma)$ now determine a magnetic flow on the cotangent bundle $T^*M$, as follows.  Let $\omega_\can$ be the canonical symplectic structure on $T^*M$, and $\pi : T^*M \to M$ the canonical projection.  Then $\omega_\can + \pi^* \sigma$ is a closed nondegenerate 2-form on $T^*M$.  In other words, $\sigma$ determines a ``twisted'' symplectic structure on the cotangent bundle $T^*M$.  The magnetic flow $\phi_t$ is the Hamiltonian flow on the symplectic manifold $(T^*M, \omega_\can + \pi^* \sigma)$ of the metric Hamiltonian $H: T^*M \to \mathbb{R}$ defined by
\begin{align} \label{eq:metric hamiltonian def}
H(x,\lambda) = \frac{1}{2}g_x(\lambda, \lambda) = \frac{1}{2}|\lambda|^2
\end{align}
where, by abuse of notation, $g$ also denotes the metric induced in the cotangent bundle.  The terminology comes from the fact that $\phi_t$ models the motion of a particle of unit mass and unit charge under the effect of a magnetic field. When $\sigma$ is exact, $\sigma = d\tau$ for some 1-form $\tau$ on $M$, then $\phi_t$ is the same as the flow of the Hamiltonian system $(T^*M, \omega_\can, H_1)$ where $H_1 : T^*M \to \mathbb{R}$ is the function $H_1(x,\lambda) = (1/2)|\lambda + \tau_x|^2$.  When $\sigma \equiv 0$, we obtain the geodesic flow.  Since $H$ is preserved by the Hamiltonian flow, the magnetic flow leaves the energy hypersurfaces $H^{-1}(c) \subset T^*M$ invariant.  However, unlike the geodesic flow, the magnetic flow on two different energy levels are not reparameterizations of each other.  Magnetic flows were first considered by Arnold in \cite{arnold} and by Anosov and Sinai in \cite{anosov_sinai}.

\bigskip

\noindent The topological entropy of a dynamical system is a measure of the global orbit structure complexity.  Roughly speaking, it is the exponential growth rate of the number of orbit segments that can be distinguished with arbitrarily fine, but finite, precision. For a precise definition see \cite{hasselblatt_katok}, \cite{robinson}, or \cite{bowen_entropy_endos}.  The topological entropy of a geodesic or magnetic flow $\phi_t$, denoted $h_\Top(\phi_t)$, can be defined as the topological entropy of the restriction to the unit cotangent bundle of the time-1 map $\phi_1$.

\bigskip

\noindent The relationship between the topology and geometry of a Riemannian manifold and the topological entropy of its geodesic flow has been studied intensely.  In addition to the vast literature on spaces of nonpositive curvature, see \cite{dinaburg}, \cite{freire_mane}, \cite{mane}, \cite{manning}, or \cite{paternain_geod_flows} for a sampling of such results. It is interesting to ask if and how these results extend to the class of magnetic flows.  In fact, given a magnetic field $\sigma$, one can consider the family of flows corresponding to the magnetic systems $(M, g, c\sigma)$, for $c \in \mathbb{R}$, as a perturbation of the geodesic flow when $c = 0$.

\bigskip 

\noindent Some results in this direction have been obtained when the underlying geodesic flow is known to be Anosov.  G. and M. Paternain show in \cite{paternain_paternain_deriv_ent1} that if we start with an Anosov geodesic flow, and we consider a small perturbation of it by twisting the symplectic structure of $T^*M$, then the topological entropy strictly decreases.  In \cite{grognet_ent}, Grognet studies the case of compact Riemannian manifolds with negatively pinched curvature.  If the $L^\infty$ norm of the magnetic field and its first derivative are small enough relative to the curvature bounds, then the magnetic flow is Anosov and its topological entropy is bounded explicitly in terms of that of the geodesic flow.  K. Burns and G. Paternain introduce in \cite{burns_paternain_anosov_mag} a volume entropy using certain magnetic balls, and use it to bound from below the topological entropy of the magnetic flow (cf. \cite{manning}).  They also show that equality holds when the magnetic flow is Anosov.

\bigskip

\noindent In the case of Riemannian surfaces, J. Miranda shows in \cite{miranda} that if the magnetic flow has a non-hyperbolic closed orbit on some energy level $H^{-1}(c)$, then there exists a $C^\infty$-perturbation of the 2-form $\sigma$ such that the new magnetic flow has positive topological entropy in $H^{-1}(c)$. In the homogeneous setting, L. Butler and G. Paternain \cite{but_pat_sol} study the Liouville (or metric) entropy of the magnetic flow on compact quotients of the ${\bf Sol}$ Lie group determined by a left-invariant metric and a distinguished left-invariant magnetic field $\sigma$.  Modulating the intensity of $\sigma$ by a parameter $c$, they find that the entropy of the corresponding magnetic flow $\phi^c_t$ is identically $1$ for all parameter values, i.e. $h_\Top(\phi^c_t) \equiv 1$ for all $c\in [0, \infty)$.

\bigskip

\noindent In this paper, we focus on the topological entropy of left-invariant magnetic flows on 2-step nilmanifolds.  Given a simply connected nilpotent Lie group $G$ and a cocompact discrete subgroup $\Gamma < G$, the closed manifold $\Gamma \backslash G$ is called a nilmanifold.  When $G$ is $k$-step, the nilmanifold $M$ is called $k$-step.  A metric $g$ or magnetic field $\sigma$ on $M$ is called left-invariant if its pullback to $G$ is left-invariant.  Note that 1-step nilmanifolds with left-invariant metrics are precisely flat tori.  Our first result is 

\begin{thm} \label{main thm zero ent}
Let $\Gamma \backslash G$ be a 2-step nilmanifold endowed with a left-invariant metric $g$ and a left-invariant magnetic field $\sigma$ that vanishes on the derived algebra and represents a rational cohomology class.  Then the flow of the magnetic system $(\Gamma \backslash G, g, \sigma)$, on the unit cotangent bundle, has zero topological entropy.
\end{thm}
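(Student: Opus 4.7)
The plan is to realize the magnetic flow of $(M,g,\sigma)$ as a factor of the geodesic flow on a larger $2$-step nilmanifold $\tilde M$ that is a principal $S^1$-bundle over $M$, via the Kaluza--Klein construction, and then invoke the zero topological entropy of left-invariant geodesic flows on $2$-step nilmanifolds.

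Since $[\sigma]$ is rational, a standard time/fiber rescaling argument (using that the magnetic flows of $(M,g,\sigma)$ and $(M,g,k\sigma)$ at appropriately related energy levels are conjugate up to a linear time change, hence have simultaneously vanishing topological entropy) reduces the problem to the case $[\sigma] \in H^2(M,\mathbb{Z})$. Form the central extension of Lie algebras $\tilde{\mathfrak g} = \mathfrak g \oplus \mathbb{R}Z$ with bracket
\[
[X+aZ,\,Y+bZ]_{\tilde{\mathfrak g}} = [X,Y]_{\mathfrak g} + \sigma(X,Y)Z, \qquad Z \text{ central}.
\]
Closedness of $\sigma$ (which, incidentally, is automatic here from the hypothesis on the derived algebra) is precisely the Jacobi identity for $\tilde{\mathfrak g}$, and the same hypothesis, combined with $\mathfrak g$ being $2$-step, ensures $\tilde{\mathfrak g}$ is again $2$-step nilpotent. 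By Malcev theory the integral cocycle $\sigma$ produces a cocompact lattice $\tilde\Gamma \subset \tilde G$ fitting in $1 \to \mathbb{Z} \to \tilde\Gamma \to \Gamma \to 1$, so $\tilde M = \tilde\Gamma \backslash \tilde G$ is a $2$-step nilmanifold and a principal $S^1$-bundle $\pi : \tilde M \to M$.

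Equip $\tilde G$ with the left-invariant metric $\tilde g$ obtained from $g$ by declaring $Z$ orthogonal to $\mathfrak g$ and of unit length; equivalently $\tilde g = \pi^* g + \theta \otimes \theta$, where $\theta$ is the $1$-form dual to $Z$. A Chevalley--Eilenberg computation shows that $\theta$ is a principal connection whose curvature equals $\pi^* \sigma$ (up to an overall sign, which we absorb into conventions). Because $Z$ is central, $\Ad(\exp(tZ)) = \Id$, so right translation by $\exp(tZ)$ is an isometry of $(\tilde G, \tilde g)$; hence $Z$ descends to a Killing field on $\tilde M$ and the momentum $p_Z = \langle \dot{\tilde\gamma}, Z\rangle$ is conserved along geodesics. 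The standard Kaluza--Klein identity then identifies, for each fixed value $s$ of $p_Z$, the projection of the corresponding sector of the geodesic flow on $\tilde M$ with the magnetic flow of $(M,g,s\sigma)$ on a certain energy level; choosing $s$ appropriately, we realize the unit-speed magnetic flow of $(M,g,\sigma)$ as a continuous factor, via $\pi$, of the geodesic flow of $\tilde g$ restricted to a compact invariant subset of $T\tilde M$. The factor inequality for topological entropy gives $h_\Top(\phi_t) \leq h_\Top(\tilde\phi_t)$, where $\tilde\phi_t$ denotes the geodesic flow on $\tilde M$.

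The main obstacle is the remaining input: that the left-invariant geodesic flow of a $2$-step nilmanifold has zero topological entropy. Manning's inequality, applied to the polynomial growth of $\pi_1(\tilde M)$, supplies only the trivial lower bound, so a genuinely dynamical upper bound is required. I would pursue this by exploiting the explicit structure of left-invariant geodesics on $2$-step nilpotent Lie groups: the orthogonal projection of velocity onto the center of $\tilde{\mathfrak g}$ is a first integral, and the remaining horizontal motion is governed by a constant-coefficient linear ODE in left-invariant frames, supplying a large commuting family of first integrals (in the spirit of Butler and Eberlein). Combining this near-integrable structure with a Yomdin-type bound on the exponential growth rate of Jacobi fields in left-invariant frames should then yield the required zero entropy; controlling the Jacobi equation uniformly in the absence of any curvature-sign assumption is the delicate part of this step.
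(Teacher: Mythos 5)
Your construction is, up to the point you yourself flag as the ``main obstacle,'' the same as the paper's: form the central extension $\tilde{\mathfrak g}=\mathfrak g\oplus\mathbb{R}Z$ with bracket twisted by $\sigma$, observe that vanishing of $\sigma$ on the derived algebra keeps $\tilde{\mathfrak g}$ two-step (and, as you correctly note in passing, already forces $d\sigma=0$), use rationality of $[\sigma]$ to produce a lattice $\tilde\Gamma$, and realize the unit-energy magnetic flow of $(\Gamma\backslash G,g,\sigma)$ as a continuous factor of the geodesic flow of $\tilde g$ restricted to the compact invariant set where $p_Z=s$ and the speed is $\sqrt{1+s^2}$; the factor inequality for topological entropy then reduces everything to the vanishing of $h_\Top$ for the left-invariant geodesic flow on the two-step nilmanifold $\tilde\Gamma\backslash\tilde G$. (The paper phrases this via Marsden--Weinstein reduction and Kuwabara's proposition, and upgrades your inequality to an equality using Bowen's theorem on free compact group actions, but for the present theorem the inequality you use suffices.)

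The genuine gap is that you leave that last input unproved. The vanishing of the topological entropy of \emph{every} left-invariant geodesic flow on a two-step nilmanifold is not something that can be dispatched in a closing paragraph: it is a theorem of Butler, quoted in the paper as Theorem \ref{butler_zero_ent}, and it carries the entire dynamical content of the argument. Your proposed route to it does not constitute a proof. The existence of a large family of first integrals does not by itself force zero entropy --- there exist Liouville integrable geodesic flows with positive topological entropy (Bolsinov and Taimanov), so the ``near-integrable structure'' must be supplemented by a genuine growth estimate; and the estimate you defer, namely a uniform subexponential bound on the growth of the differential of the geodesic flow (equivalently, of Jacobi fields) with no curvature-sign hypothesis, is precisely the hard part and precisely what Butler proves. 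As written, then, the proof is incomplete; it becomes complete, and essentially identical to the paper's, once the final sketch is replaced by an appeal to Theorem \ref{butler_zero_ent}.
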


\noindent In particular, we have the following corollary.

\begin{cor} \label{main thm zero ent cor}
Let $\Gamma \backslash G$ be a 2-step nilmanifold endowed with a left-invariant metric $g$ and an exact left-invariant magnetic field $\sigma$ that represents a rational cohomology class.  Then the flow of the magnetic system $(\Gamma \backslash G, g, \sigma)$, on the unit cotangent bundle, has zero topological entropy.
\end{cor}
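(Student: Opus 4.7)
The plan is to deduce Corollary~\ref{main thm zero ent cor} from Theorem~\ref{main thm zero ent} by checking both of that theorem's hypotheses under the stronger assumption that $\sigma$ is exact. The rationality of the cohomology class is immediate: an exact form represents the zero class, which is certainly rational. So the entire task reduces to showing that an exact left-invariant $2$-form on a $2$-step nilmanifold vanishes on the derived algebra.

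To translate ``exact on $\Gamma \backslash G$'' into a statement about $\Lambda^2 \mathfrak{g}^*$, I would invoke Nomizu's theorem, which asserts that for a nilmanifold $\Gamma \backslash G$ the inclusion of left-invariant forms into all smooth forms induces an isomorphism
\[
H^*_{CE}(\mathfrak{g}) \xrightarrow{\;\cong\;} H^*_{dR}(\Gamma \backslash G).
\]
In particular, a left-invariant closed form that is exact on $M$ is already exact in the Chevalley--Eilenberg complex, so there exists $\alpha \in \mathfrak{g}^*$ with
\[
\sigma(X, Y) \;=\; -\alpha([X, Y]) \qquad \text{for all } X, Y \in \mathfrak{g}.
\]

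The $2$-step hypothesis now enters in a single line. Since $[\mathfrak{g}, [\mathfrak{g}, \mathfrak{g}]] = 0$, the derived algebra $[\mathfrak{g}, \mathfrak{g}]$ lies in the center of $\mathfrak{g}$. Hence for any $Z \in [\mathfrak{g}, \mathfrak{g}]$ and any $Y \in \mathfrak{g}$ one has $[Z, Y] = 0$, so $\sigma(Z, Y) = -\alpha([Z, Y]) = 0$. This is precisely the vanishing of $\sigma$ on the derived algebra demanded by Theorem~\ref{main thm zero ent}, and applying that theorem yields the conclusion.

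I do not anticipate a genuine obstacle. The only nontrivial input is Nomizu's theorem, which is the standard bridge between de Rham and Chevalley--Eilenberg cohomology for nilmanifolds; everything else is a one-step algebraic consequence of $2$-step nilpotency. The mild subtlety worth flagging is that exactness of $\sigma$ on $M$ does not \emph{a priori} give a left-invariant primitive; it is Nomizu's theorem that promotes the geometric primitive to an algebraic one in $\mathfrak{g}^*$, and without this promotion the argument would not work.
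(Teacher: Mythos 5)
Your argument is correct and is essentially the paper's own proof: both invoke Nomizu's isomorphism to promote de Rham exactness of a left-invariant form to exactness in the Chevalley--Eilenberg complex, then use the formula $d\tau(X,Y)=-\tau([X,Y])$ together with the fact that in a $2$-step algebra the derived algebra is central, so the primitive kills all brackets with elements of $[\mathfrak{g},\mathfrak{g}]$. The subtlety you flag about needing Nomizu to obtain a left-invariant primitive is exactly the point the paper relies on as well.
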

\noindent In contrast to \cite{butler_wild} where Butler has shown that the topological entropy of any left-invariant geodesic flow on a 2-step nilmanifold vanishes, we show that there exist a magnetic flow, with $g$ and $\sigma$ left-invariant, with positive topological entropy.  Moreover, the example we construct has positive topological entropy on arbitrarily high energy levels.  Precisely,

\begin{thm} \label{main thm pos ent}
There exists a 2-step nilmanifold $\Gamma \backslash G$ with left-invariant metric $g$ and left-invariant magnetic field $\sigma$ such that the flow of the magnetic system $(\Gamma \backslash G, g, \sigma)$ has positive topological entropy on arbitrarily high energy levels.
\end{thm}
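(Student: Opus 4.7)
My approach is to realize the desired magnetic flow as a symplectic reduction of a geodesic flow on a higher-step nilmanifold, and then invoke a known positive-entropy example in that higher-step setting. The starting observation is that any left-invariant closed 2-form $\sigma$ on a simply connected Lie group $G$ is a Chevalley--Eilenberg 2-cocycle in $\Lambda^2\mathfrak{g}^*$, and hence determines a central Lie-algebra extension
\[ 0 \to \mathbb{R} \to \tilde{\mathfrak{g}} \to \mathfrak{g} \to 0. \]
If $\mathfrak{g}$ is 2-step nilpotent and $\sigma$ does \emph{not} vanish on the derived algebra $\mathfrak{z}=[\mathfrak{g},\mathfrak{g}]$ (so the hypotheses of Theorem \ref{main thm zero ent} fail), a short cocycle-level calculation shows that $\tilde{\mathfrak{g}}$ is 3-step nilpotent. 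Taking $[\sigma]$ to be rational ensures that the extension integrates to a principal circle bundle of compact nilmanifolds $\tilde{M}=\tilde{\Gamma}\backslash\tilde{G}\to M=\Gamma\backslash G$.

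Next I would invoke the Kaluza--Klein, or minimal-coupling, construction: using $\sigma$ as the curvature of a left-invariant connection on $\tilde{M}$ and combining the base metric $g$ with a chosen fiber metric, one obtains a left-invariant metric $\tilde{g}$ on $\tilde{M}$ for which geodesics of $\tilde{g}$ with fiberwise momentum $q$ (conserved under the $S^1$-symmetry) project to magnetic geodesics of $(M,g,q\sigma)$. Each energy hypersurface in $T^*\tilde{M}$ decomposes into invariant $q$-slices, and restriction of the geodesic flow to a $q$-slice is flow-conjugate to the corresponding energy hypersurface of the magnetic flow on $M$.

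I would then select $\mathfrak{g}$ and $\sigma$ so that $\tilde{\mathfrak{g}}$, equipped with the Kaluza--Klein metric, realises (possibly after rescaling) one of Butler's 3-step nilmanifold examples in \cite{butler_wild} whose geodesic flow has positive topological entropy. Since geodesic flows on different energy levels are reparameterizations of one another, positive entropy persists at every energy on $\tilde{M}$; it must therefore reside in some family of $q$-slices. By rescaling $\sigma\mapsto c\sigma$ in the magnetic system (equivalently, rescaling $q\mapsto cq$ in the reduction) and adjusting the total energy on $\tilde{M}$, one should be able to produce positive magnetic entropy on arbitrarily high energy levels for a single fixed pair $(g,\sigma)$.

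The main obstacle is transferring the positive entropy of the 3-step geodesic flow onto the specific magnetic slices corresponding to arbitrarily high magnetic energies. The $S^1$-fiber action is by isometries and so contributes no entropy itself, forcing the geodesic entropy to live in the $q$-slices; but controlling which $q$-slices carry positive entropy, and matching this to the rescaling required to reach arbitrarily high magnetic energies while keeping $(g,\sigma)$ fixed, will demand careful quantitative analysis of the Kaluza--Klein reduction, and likely a delicate choice of $\mathfrak{g}$ and of the 3-step example so that positive entropy is uniformly distributed across $q$.
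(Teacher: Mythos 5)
Your setup is the same as the paper's: the central extension $\widetilde{\mathfrak{g}}$ determined by the cocycle $\sigma$, the resulting principal $S^1$-bundle of nilmanifolds, and the Kaluza--Klein/symplectic-reduction identification of the $q$-slice of the geodesic flow on $\widetilde{\Gamma}\backslash\widetilde{G}$ with the magnetic flow of $(\Gamma\backslash G, g, q\sigma)$ (this is Proposition \ref{kuwabara summary} and Theorem \ref{symp_quot_id_with_mag_system}). But the step you flag at the end as the ``main obstacle'' is in fact the entire content of the theorem, and the argument you sketch for it does not close. Knowing that the geodesic flow of $\widetilde{g}$ has positive entropy on every energy level only tells you, via Bowen's supremum formula (Theorem \ref{bowen_sup_inv_levels} applied to the moment map $\mu$), that for each energy $E$ \emph{some} slice $\mu^{-1}(q(E))\cap\widetilde{H}^{-1}(E)$ carries positive entropy; it gives you no control over $q(E)$. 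The slice with momentum $q$ and energy $E$ corresponds to the magnetic system $(\Gamma\backslash G,g,q\sigma)$ at magnetic energy $E-q^2/2$, so if $q(E)$ grows proportionally with $\sqrt{E}$ you are only rescaling the field strength and never reach high energy levels for the \emph{fixed} pair $(g,\sigma)$. The rescaling $\sigma\mapsto c\sigma$ cannot repair this, since it trades field strength against energy homogeneously. What is needed is positive entropy concentrated in slices with $q$ fixed (say $q=1$) and $E\to\infty$, and that requires a genuinely quantitative input, not just the existence of a positive-entropy 3-step example. (Also, \cite{butler_wild} is the \emph{zero}-entropy result for 2-step nilmanifolds; the positive-entropy input must come from elsewhere.)

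The paper closes this gap as follows. It chooses $\mathfrak{g}$ and $\sigma$ explicitly so that $\widetilde{\mathfrak{g}}$ is the 3-step algebra $\mathfrak{t}_4$ of \cite{butler_scattering}, projects the geodesic flow to the Euler flow on $\widetilde{\mathfrak{g}}^*$ (Corollary \ref{geod flow projects to euler flow}), and uses the explicit Casimirs $K_1=p_W$ and $K_2=p_Wp_Y-p_Zp_U$: Butler's Theorem \ref{butler mainthm from scattering} puts a horseshoe on every regular coadjoint orbit $\mathcal{O}_{(1,k_2)}$ with $k_2>0$, the horseshoe lies in a single level set $\widetilde{h}^{-1}(b)$ by topological transitivity (Lemma \ref{horseshoe contained in energy sphere}), and the inequality $\widetilde{h}\geq K_2$ forces $b\geq k_2$. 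Thus one gets positive entropy in slices with $\mu=1$ fixed and energy $b$ arbitrarily large, which is exactly the uniform distribution across $q$ that your plan identifies as necessary but does not establish. Without an input of this kind --- a family of invariant sets with positive entropy, prescribed moment-map value, and unbounded energy --- your argument does not yield the theorem.
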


\noindent Recall that if $\phi_t$ is the magnetic flow of a system $(M, g, \sigma)$, then for $c > 0$ the rescaled flow $\phi_{ct}$ is the magnetic flow of the system $(M,g,c \sigma)$. Hence fixing the magnetic field and varying the energy level under consideration is equivalent to modulating the field strength with a real parameter and considering the magnetic flow only on the unit cotangent bundle. Loosely speaking, the high energy limit of the magnetic flow corresponds to the underlying geodesic flow, or zero magnetic field strength.  Theorem \ref{main thm pos ent} can be rephrased: there exists arbitrarily small $c \in (0,\infty)$ such that the flow of the magnetic system $(\Gamma \backslash G, g, c \sigma)$ on the unit cotangent bundle has positive topological entropy.  Again, by Butler \cite{butler_wild}, we know that $c = 0$ yields the geodesic flow of $(\Gamma \backslash G, g)$ which has zero topological entropy.

\bigskip

\noindent We remark briefly on the relationship between topological entropy and Ma\~{n}\'{e}'s critical value in our context.  For a fixed magnetic field, the Ma\~{n}\'{e} critical value is a distinguished energy level where the dynamics of the magnetic flow change significantly. The article \cite{symp_top_mane_crit_val} is an excellent reference; the example given there of a left-invariant metric and magnetic field on a Heisenberg manifold (a nilmanifold whose universal cover is the Heisenberg group) is particularly relevant here.  For any exact magnetic field on a nilmanifold, the Ma\~{n}\'{e} critical value is finite.  Moreover, by Lemma 2.2 of \cite{burns_paternain_anosov_mag}, the citical value is positive if such a magnetic field is not identically zero.  Suppose that $(\Gamma \backslash G, g, \sigma)$ is a magnetic system satisfying the hypotheses of Corollary \ref{main thm zero ent cor}.  Since rescaling the magnetic field by a rational number does not change the fact that it represents a rational cohomology class, applying Corollary \ref{main thm zero ent cor} to the systems $(\Gamma \backslash G, g, c\sigma)$, $c \in \mathbb{Q}$, $c > 0$, shows that the topological entropy of the original system $(\Gamma \backslash G, g, \sigma)$ vanishes on energy levels both above and below the critical value.  By contrast, it is shown in \cite{symp_top_mane_crit_val} that for closed hyperbolic surfaces with magnetic field given by the area form, the topological entropy on the unit tangent bundle vanishes if and only if the field strength is below the critical value, or, equivalently, the entropy of the magnetic flow on a specific energy level vanishes if and only if the energy level is above the critical value.

\bigskip

\noindent In the example presented in Section \ref{subsec:the example}, the magnetic field is nonexact and the underlying manifold has nilpotent, and hence amenable, fundamental group.  Thus by Corollary 5.4 of \cite{paternain_mag_rig_of_hor_flows} the critical value in this case is $\infty$.  Equivalently, if we fix the energy level at one but allow the field strength to vary, then the Ma\~{n}\'{e} critical value corresponds to magnetic field strength zero.  So Theorem \ref{main thm pos ent} implies that there are field strengths arbitrarily close to the critical value zero for which the magnetic flow has positive topological entropy, while the geodesic flow (field strength zero) has zero topological entropy. Again by contrast, the particular magnetic flow on a compact quotient of ${\bf Sol}$ investigated by Butler and Paternain in \cite{but_pat_sol} also has critical value $\infty$ and has positive topological entropy on all energy levels, but unlike our example the underlying geodesic flow has positive topological entropy.

\bigskip

\noindent The main tool used in the proof is an associated principle $S^1$-bundle $P \to M$ that arises when quantizing a classical magnetic system \cite{kuwabara}.  The action of the structure group $S^1$ lifts to a Hamiltonian action on $T^*P$, and the Marsden-Weinstein reduction procedure yields a symplectic manifold that is symplectormorphic to $(T^*M, \omega_\can + \pi^* \sigma)$.  In fact, the geodesic flow on $P$ descends to a flow on the reduced manifold that is topologically conjugate to the magnetic flow.  Moreover the space $P$ in this case will be a nilmanifold. So we can combine results of Butler \cite{butler_wild}, \cite{butler_scattering} on the topological entropy of left-invariant geodesic flows on nilmanifolds with the work of Bowen \cite{bowen_entropy_endos} in order to obtain information about the topological entropy of magnetic flows on $M$.

\bigskip

\noindent In Section \ref{sec:ext_Lie_alg}, we show how the symplectic reduction procedure yields a flow conjugate to the magnetic flow and in Section \ref{sec:topological entropy} we recall some properties of topological entropy that will be needed.  In Section \ref{sec:mag flows with zero ent} we prove Theorems \ref{main thm zero ent} and \ref{main thm zero ent cor}, while in Section \ref{sec:mag flow with pos ent} we prove \ref{main thm pos ent}.

\section{The Magnetic flow via symplectic reduction} \label{sec:ext_Lie_alg}

\noindent Let $G$ be a simply connected 2-step nilpotent Lie group and let $\mathfrak{g} = \Lie(G)$ be the Lie algebra of $G$.  Given a left-invariant magnetic field $\sigma$, we can form an associated Lie group $\widetilde{G}$ as follows. Define the vector space $\widetilde{\mathfrak{g}} := \mathfrak{g} \oplus \mathbb{R}W $.  For any two vectors $X,Y \in \widetilde{\mathfrak{g}}$, let $X = X' + aW, Y = Y' + bW$ be their decomposition in terms of the direct sum, $X',Y' \in \mathfrak{g}$ and $a,b \in \mathbb{R}$.  Now define the bracket $\{ \cdot, \cdot \}$ by
\begin{align} \label{eq:extended bracket definition}
\{ X, Y \} = [X',Y'] + \sigma(X',Y')W
\end{align}
It is straightforward to check, using the fact that $\sigma$ is closed, that $\widetilde{\mathfrak{g}}$ endowed with $\{ \cdot, \cdot \}$ is a Lie algebra and, in fact, nilpotent.  Moreover, $\widetilde{\mathfrak{g}}$ is 2-step if and only if $\sigma$ vanishes on the derived algebra.  Let $\widetilde{G}$ be the simply-connected Lie group with Lie algebra $\widetilde{\mathfrak{g}}$. If $g$ is a left-invariant Riemannian metric on $G$, then we define $\widetilde{g}$ to be the left-invariant Riemannian metric such that its restriction to $\mathfrak{g}$ agrees with $g$ and $W$ is a unit vector orthogonal to $\mathfrak{g}$. 

\bigskip

\noindent Let $f:\widetilde{G} \to G$ be the surjective Lie group homomorphism whose differential at the identity is orthogonal projection $\widetilde{\mathfrak{g}} \to \mathfrak{g}$. Then it descends to a Lie group isomorphism $\bar{f}: \widetilde{G}/ \Ker(f) \to G$, which we use to identify the two Lie groups.  Also let $\Exp, \log, \widetilde{\Exp}, \widetilde{\log}$ denote the exponential maps and their inverses for $G$ and $\widetilde{G}$, respectively.

\bigskip

\noindent Next let $\Gamma < G$ be a cocompact discrete subgroup so that $\Gamma \backslash G$ is a nilmanifold.  Nomizu shows in \cite{nomizu} that real Lie algebra cohomology of $\mathfrak{g}$ is canonically isomorphic to the de Rahm cohomology of $\Gamma \backslash G$ via the map that sends an alternating  $k$-linear form $\sigma$ on $\mathfrak{g}$ to the unique left-invariant $k$-form on $G$ also denoted $\sigma$, by abuse of notation.  By a result of Sullivan \cite{sullivan}, Nomizu's result is also valid for the rational cohomologies.  In particular, a left-invariant 2-form $\sigma$ on $\Gamma \backslash G$ represents a rational cohomology class if and only if the associated bilinear form, also denoted $\sigma$, on $\mathfrak{g}$ satisfies $\sigma(\mathcal{L},\mathcal{L}) \subset \mathbb{Q}$, where $\mathcal{L} \subset \mathfrak{g}$ is the lattice given by the $\mathbb{Z}$-span of $\log(\Gamma)$. 

\bigskip

\noindent It is necessary to construct a cocompact discrete subgroup $\widetilde{\Gamma} < \widetilde{G}$ in such a way that $f(\widetilde{\Gamma}) = \Gamma$.  The existence of such a subgroup is equivalent to the condition that $\sigma$ represents a rational cohomology class.  Since there is an integer $k \in \mathbb{Z}$ such that $k \sigma(\mathcal{L}, \mathcal{L}) \subset \mathbb{Z}$, we can form the set $S = \{ X + n/(12k^2)W | X \in \log(\Gamma), n \in \mathbb{Z} \} \subset \widetilde{\mathfrak{g}}$.  It is straightforward to check using the Baker-Campbell-Hausdorff formula and \eqref{eq:extended bracket definition} that $\widetilde{\Gamma} := \Exp(S)$ is the desired subgroup.  Note that there are many cocompact, discrete subgroups $\widetilde{\Gamma} < \widetilde{G}$ satisfying the condition $f(\widetilde{\Gamma}) = \Gamma$, but the choice does not affect what follows.

\bigskip

\noindent The Lie group homomorphism $f: \widetilde{G} \to G$ now descends to a smooth map $\widetilde{\Gamma} \backslash \widetilde{G} \to \Gamma \backslash G$.  This map will also be denoted as $f(\widetilde{\Gamma}x) = \Gamma f(x)$ where $x \in \widetilde{G}$.  Now $\widetilde{\Gamma} \backslash \widetilde{G}$ is the total space of a principal $S^1$-bundle over $\Gamma \backslash G$.  The action of the structure group is given by $\rho_t : \widetilde{\Gamma} \backslash \widetilde{G} \to \widetilde{\Gamma} \backslash \widetilde{G}$ by
\begin{align} \label{eq:princpal S^1 action}
\rho_t(x) = L_{\widetilde{\Exp}(tW)}(x) = \widetilde{\Exp}(tW)x
\end{align}
The action lifts to a Hamiltonian action on the cotangent bundle $T^*(\widetilde{\Gamma} \backslash \widetilde{G})$ with its canonical symplectic structure $\widetilde{\omega}_\can$ and with moment map $\mu : T^* (\widetilde{\Gamma} \backslash \widetilde{G}) \to \mathbb{R}$ given by $\mu(\widetilde{\Gamma}x,\lambda) = \lambda(W)$.  The differential $d\mu$ never vanishes, so $\mu^{-1}(c)$ is a submanifold for every $c \in \mathbb{R}$.  Moreover the action restricted to any level set is free.  Hence the Marsden-Weinstein reduction procedure yields a symplectic manifold $(\mu^{-1}(c)/S^1, \widetilde{\omega}_\red)$. The action also preserves the metric Hamiltonian $\widetilde{H} : T^*(\widetilde{\Gamma} \backslash \widetilde{G}) \to \mathbb{R}$, given by $\widetilde{H}(x,\lambda) = \widetilde{g}_x(\lambda, \lambda)/2$, and so induces a function $\bar{H}_c : \mu^{-1}(c)/S^1 \to \mathbb{R}$. In this situation, Kuwabara shows the following.
\begin{prop} [\cite{kuwabara}, Proposition 1.1] \label{kuwabara summary}
For each $c \in \mathbb{R}$ there exists a diffeomorphism $\psi_c : \mu^{-1}(c)/S^1 \to T^*(\Gamma \backslash G)$ such that
\begin{align*}
\widetilde{\omega}_\red = \psi^*_c(\omega_\can + \pi^*(c\sigma)) \qquad \qquad \bar{H}_c = \psi^*_c H + \frac{c^2}{2}
\end{align*}
where $H: T^*(\Gamma \backslash G) \to \mathbb{R}$ is the metric Hamiltonian.  In particular, the Hamiltonian system $(T^*(\Gamma \backslash G), \omega_\can + \pi^*(c\sigma), H)$, which yields the magnetic flow of $(\Gamma \backslash G, g, c \sigma)$, is isomorphic with $(\mu^{-1}(c)/S^1, \widetilde{\omega}_\red, \bar{H}_c)$, and taking $c = 0$ yields the geodesic flow on $\Gamma \backslash G$.
\end{prop}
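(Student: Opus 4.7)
The plan is to exhibit the diffeomorphism $\psi_c$ explicitly using the natural connection 1-form on the principal $S^1$-bundle $P := \widetilde\Gamma\backslash\widetilde G \to \Gamma\backslash G$ and then verify the two stated identities by a Liouville 1-form calculation. First, take $\alpha$ to be the left-invariant 1-form on $\widetilde G$ metrically dual to $W$; since $\widetilde g$ is left-invariant and $\alpha(W)=1$, the form $\alpha$ descends to an $S^1$-invariant principal connection on $P$. Applying the Maurer--Cartan formula to the bracket \eqref{eq:extended bracket definition} gives $d\alpha(X',Y') = -\alpha(\{X',Y'\}) = -\sigma(X',Y')$ on left-invariant horizontal vectors, and $\iota_W d\alpha = \mathcal{L}_W\alpha - d(\alpha(W)) = 0$, so $d\alpha$ is basic and equals $-\pi^*\sigma$ as a 2-form on $P$.

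Next, define $\psi_c$ using the orthogonal splitting of $T_x^*P$ induced by $\widetilde g$: for $(x,\lambda)\in\mu^{-1}(c)$ the condition $\lambda(W_x)=c$ forces a unique orthogonal decomposition $\lambda = \lambda^h + c\alpha_x$ in which $\lambda^h$ annihilates $W_x$, so $\lambda^h$ descends to a unique $\eta \in T_{\pi(x)}^*(\Gamma\backslash G)$ with $\pi^*\eta = \lambda^h$. Set $\psi_c([(x,\lambda)]) = (\pi(x),\eta)$. Since $\alpha$ and $\widetilde g$ are both $S^1$-invariant this decomposition is equivariant and $\psi_c$ descends to $\mu^{-1}(c)/S^1$; the explicit inverse $(y,\eta)\mapsto [(x,\pi^*\eta + c\alpha_x)]$ for any $x\in\pi^{-1}(y)$ shows $\psi_c$ is a diffeomorphism.

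To verify the symplectic identity, let $\theta_P,\theta_M$ denote the Liouville 1-forms, $\pi_P: T^*P \to P$ the cotangent projection, $\iota : \mu^{-1}(c) \hookrightarrow T^*P$ the inclusion, and $p_c : \mu^{-1}(c) \to \mu^{-1}(c)/S^1$ the quotient. Using the tautological formula $\theta_P(v) = \lambda(d\pi_P(v))$ together with the decomposition $\lambda = \lambda^h + c\alpha_x$ directly produces
\begin{equation*}
\iota^*\theta_P \;=\; (\psi_c\circ p_c)^*\theta_M \;+\; c\,(\pi_P\circ\iota)^*\alpha
\end{equation*}
on $\mu^{-1}(c)$. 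Differentiating, substituting $d\alpha = -\pi^*\sigma$, and invoking the defining relation $p_c^*\widetilde\omega_\red = \iota^*\widetilde\omega_\can$ of Marsden--Weinstein reduction yields $\widetilde\omega_\red = \psi_c^*(\omega_\can + c\pi^*\sigma)$. For the Hamiltonian identity, orthogonality of $W$ to $\mathfrak g$ in $\widetilde g$ with $|W|=1$ gives $|\lambda|^2 = |\lambda^h|^2 + c^2 = |\eta|^2 + c^2$, hence $\widetilde H = \psi_c^* H \circ p_c + c^2/2$, which descends to the stated equation.

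The main obstacle is the bookkeeping in the Liouville 1-form identity: one must carefully track pullbacks of $\theta_P$, $\theta_M$, and $\alpha$ along the three distinct maps $\iota$, $\pi_P$, and $\pi$, and invoke $d\alpha = -\pi^*\sigma$ to convert the connection-curvature term into the magnetic correction $c\pi^*\sigma$. The sign conventions for $\omega_\can$ versus $d\theta$ and the direction of the $S^1$-action must be tracked consistently. Once this algebra is in place, the symplectic and Hamiltonian identities fall out, and the $c=0$ specialization immediately reproduces the standard symplectomorphism $\mu^{-1}(0)/S^1 \cong T^*(\Gamma\backslash G)$ carrying the geodesic flow of $(\Gamma\backslash G, g)$.
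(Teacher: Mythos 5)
Your proposal is correct. The paper itself gives no proof of this proposition---it is imported verbatim from Kuwabara---so there is no internal argument to compare against; what you have written is the standard connection-splitting proof (decompose $\lambda=\lambda^h+c\alpha_x$ via the left-invariant connection form dual to $W$, compare Liouville forms, use $d\alpha=-\pi^*\sigma$), and it is sound, with signs consistent with the paper's convention $\omega=-d\theta$ from Section 5.1. The only step worth flagging for a careful reader is $\mathcal{L}_W\alpha=0$: the flow of the left-invariant field $W$ is right translation by $\widetilde{\Exp}(tW)$, which preserves the left-invariant form $\alpha$ only because $W$ is central in $\widetilde{\mathfrak{g}}$ (equivalently, one can just compute $d\alpha(W,\cdot)=-\alpha(\{W,\cdot\})=0$ directly from \eqref{eq:extended bracket definition}).
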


\noindent We now focus on specific energy levels since they are invariant compact subsets.  For any positive real numbers $c, d > 0$, define the set
\begin{align} \label{eq:def of the invariant set}
\Lambda_{c,d} = \mu^{-1}(c) \cap \widetilde{H}^{-1}\bigg( \frac{d^2 + c^2}{2}\bigg) \subset T^*(\widetilde{\Gamma} \backslash \widetilde{G})
\end{align}

\begin{lem} \label{specific energy levels}
The set $\Lambda_{c,d}$ is invariant under both the $S^1$ action $\rho_t$ and the geodesic flow $\widetilde{\phi}_t$ on $T^*(\widetilde{\Gamma} \backslash \widetilde{G})$.  Moreover, $\Lambda_{c,d} / S^1 = \bar{H}^{-1}_c((d^2 + c^2)/2)$ and the geodesic flow descends to a flow $\bar{\phi}_t$ on $\Lambda_{c,d} / S^1$.
\end{lem}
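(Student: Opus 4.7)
The plan is to verify each claim in turn, leveraging the fact that the $S^1$-action on $T^*(\widetilde{\Gamma} \backslash \widetilde{G})$ is Hamiltonian with moment map $\mu$ and preserves the metric Hamiltonian $\widetilde{H}$.

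First I would establish $S^1$-invariance of $\Lambda_{c,d}$. Since $\mu$ is the moment map of an $S^1$-action and $S^1$ is abelian, $\mu$ is invariant under $\rho_t$ (equivalently, $\mu$ is constant along orbits of its own Hamiltonian vector field). Hence $\mu^{-1}(c)$ is $\rho_t$-invariant. Next, the action $\rho_t$ is given by left-translation by $\widetilde{\Exp}(tW)$, and $\widetilde{g}$ is left-invariant, so $\widetilde{H}$ is $\rho_t$-invariant; therefore $\widetilde{H}^{-1}((d^2+c^2)/2)$ is $\rho_t$-invariant. The intersection $\Lambda_{c,d}$ is then $\rho_t$-invariant.

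Second, I would show invariance under the geodesic flow $\widetilde{\phi}_t$, which is the Hamiltonian flow of $\widetilde{H}$. Energy conservation immediately gives invariance of $\widetilde{H}^{-1}((d^2+c^2)/2)$. For $\mu^{-1}(c)$, I would invoke Noether's theorem in Hamiltonian form: since $\widetilde{H}$ is invariant under the Hamiltonian $S^1$-action generated by $\mu$, the Poisson bracket $\{\widetilde{H}, \mu\}$ vanishes, so $\mu$ is a first integral of $\widetilde{\phi}_t$. Intersecting gives the desired invariance.

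Third, for the identification $\Lambda_{c,d}/S^1 = \bar{H}_c^{-1}((d^2+c^2)/2)$, recall that $\bar{H}_c$ is by definition the function on $\mu^{-1}(c)/S^1$ induced by $\widetilde{H}|_{\mu^{-1}(c)}$. Thus a point in $\mu^{-1}(c)/S^1$ lies in $\bar{H}_c^{-1}((d^2+c^2)/2)$ if and only if any (equivalently, every) representative lies in $\widetilde{H}^{-1}((d^2+c^2)/2)$, which is exactly the condition that the representative belongs to $\Lambda_{c,d}$.

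Finally, for the descent of $\widetilde{\phi}_t$ to a flow $\bar{\phi}_t$ on $\Lambda_{c,d}/S^1$: the $S^1$-invariance of $\widetilde{H}$ implies that $\widetilde{\phi}_t$ and $\rho_s$ commute (the Hamiltonian flows of Poisson-commuting functions commute), and we have just shown that both preserve $\Lambda_{c,d}$. Therefore $\widetilde{\phi}_t$ permutes $S^1$-orbits within $\Lambda_{c,d}$ and defines a smooth flow on the quotient. None of these steps poses a real obstacle; the lemma is a package of standard Marsden--Weinstein bookkeeping, and the only point that even requires a moment of thought is recognizing that the level set of $\bar{H}_c$ corresponds exactly to intersecting the level set of $\widetilde{H}$ with $\mu^{-1}(c)$ before quotienting.
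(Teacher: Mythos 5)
Your proposal is correct and follows essentially the same route as the paper: both arguments rest on the Poisson commutation of $\widetilde{H}$ and $\mu$ (equivalently, the $S^1$-invariance of $\widetilde{H}$ coming from left-invariance of $\widetilde{g}$), the tautological identification of the level set of $\bar{H}_c$, and the commutation of the two flows for the descent. The only cosmetic difference is that the paper justifies the descent by noting that the isometric $S^1$-action preserves the geodesic vector field, while you phrase it as commutativity of Hamiltonian flows of Poisson-commuting functions; these are the same fact.
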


\begin{proof}
Since both actions are Hamiltonian, they preserve their respective Ham- \allowbreak iltonian functions, $\widetilde{H}$ and $\mu$.  Moreover the $\rho_t$ action is the lift of an isometric action and hence preserves $\widetilde{H}$.  It follows that $\widetilde{H}$ and $\mu$ Poisson commute and are preserved under each others Hamiltonian flows. Thus $\widetilde{H}$ and $\mu$, and hence the set $\Lambda_{c,d}$, are invariant under both the geodesic flow $\widetilde{\phi_t}$ and the $S^1$ action $\rho_t$.

\bigskip

\noindent Let $q: \Lambda_{c,d} \to \Lambda_{c,d} / S^1$ denote the quotient map.  Then for any $p \in \mu^{-1}(c)$, 
\begin{align*}
q(p) \in \bar{H}_c^{-1}((d^2+c^2)/2) \iff \widetilde{H}(p) = \frac{d^2+c^2}{2} \iff p \in \Lambda_{c,d}
\end{align*}
showing that $\Lambda_{c,d} / S^1 = \bar{H}^{-1}((d^2+c^2)/2)$.

\bigskip

\noindent For the last statement, note that because the action $\rho_t$ comes from an action by isometries (left multiplication) on the base manifold, it preserves the geodesic vector field.
\end{proof}

\noindent An immediate consequence of Proposition \ref{kuwabara summary} and Lemma \ref{specific energy levels} is the following.

\begin{thm} \label{symp_quot_id_with_mag_system}
The map $\psi_c$ in Proposition \ref{kuwabara summary} conjugates the Hamiltonian flow of $(\mu^{-1}(c)/S^1, \widetilde{\omega}_\red, \bar{H})$ restricted to the energy level $\bar{H}^{-1}((d^2 + c^2)/2)$ with the Hamiltonian flow of $(T^*(\Gamma \backslash G), \omega_\can + c \pi^* \sigma, H)$ restricted to the energy level $H^{-1}(d^2/2)$.  In particular, the induced flow on $\Lambda_{c,d} / S^1$ is topologically conjugate to the magnetic flow of $(\Gamma \backslash G, g, c \sigma)$ on the energy level $H^{-1}(d^2/2)$.
\end{thm}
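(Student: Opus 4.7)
The plan is to treat this as a bookkeeping exercise combining the two results cited, since all the substantive work (the symplectomorphism and the Hamiltonian relation) is in Proposition \ref{kuwabara summary} and the invariance is in Lemma \ref{specific energy levels}.

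First I would invoke Proposition \ref{kuwabara summary} to note that $\psi_c$ is a symplectomorphism with $\bar{H}_c = \psi_c^* H + c^2/2$. Because the Hamiltonian vector field depends only on the differential of the Hamiltonian, adding the constant $c^2/2$ does not change it, so $\psi_c$ carries the Hamiltonian vector field of $\bar{H}_c$ (with respect to $\widetilde{\omega}_\red$) to the Hamiltonian vector field of $H$ (with respect to $\omega_\can + \pi^*(c\sigma))$. This gives the conjugacy of the full Hamiltonian flows on the whole reduced space.

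Next I would identify the energy levels. From $\bar{H}_c = \psi_c^* H + c^2/2$ we have
\begin{align*}
\bar{H}_c(p) = \frac{d^2 + c^2}{2} \iff H(\psi_c(p)) = \frac{d^2}{2},
\end{align*}
so $\psi_c$ restricts to a diffeomorphism $\bar{H}_c^{-1}((d^2+c^2)/2) \to H^{-1}(d^2/2)$. Combined with the previous paragraph, this proves the first assertion, since both flows preserve their respective energy levels.

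For the final assertion, Lemma \ref{specific energy levels} identifies $\Lambda_{c,d}/S^1$ with $\bar{H}_c^{-1}((d^2+c^2)/2)$ and shows that the geodesic flow $\widetilde{\phi}_t$ on $T^*(\widetilde{\Gamma}\backslash\widetilde{G})$ descends to a flow $\bar{\phi}_t$ on this set. Since the Marsden–Weinstein reduction procedure ensures that the descended flow $\bar\phi_t$ is precisely the Hamiltonian flow of $\bar{H}_c$ on $(\mu^{-1}(c)/S^1, \widetilde{\omega}_\red)$ (restricted to the invariant energy level), composing with $\psi_c$ yields a topological (in fact smooth) conjugacy with the magnetic flow of $(\Gamma\backslash G, g, c\sigma)$ on $H^{-1}(d^2/2)$.

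There is no real obstacle here; the only point that deserves care is confirming that ``conjugation of Hamiltonian flows'' is unaffected by the constant shift $c^2/2$ in the Hamiltonian, and that the energy levels match up correctly under that shift. Everything else is formal assembly of the previously stated results.
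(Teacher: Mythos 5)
Your proposal is correct and follows exactly the route the paper intends: the paper gives no explicit proof, stating only that the theorem is an immediate consequence of Proposition \ref{kuwabara summary} and Lemma \ref{specific energy levels}, and your write-up is precisely the careful unpacking of that assembly (constant shift of the Hamiltonian not affecting the Hamiltonian vector field, matching of energy levels, and the standard fact that the reduced geodesic flow is the Hamiltonian flow of the reduced Hamiltonian).
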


\section{Topological entropy} \label{sec:topological entropy}

We collect some results on topological entropy that will be used below.  The topological entropy of a flow $\phi_t$ on a compact metric space $(X,d)$ can be defined as the topological entropy of the time-1 map $\phi_1:X \to X$.     A couple of basic properties of the topological entropy are the following.

\begin{thm}[\cite{hasselblatt_katok}, Proposition 3.1.7] \label{top_ent_basic_props}
Let $(X,d)$ be a compact metric space and $f:X \to X$ a continuous map.  Then
\begin{enumerate}
\item If $\Lambda \subset X$ is a closed $f$-invariant set, then $h_\Top(f) \geq h_\Top(f|_\Lambda)$.
\item If $(Y,e)$ is another compact metric space, $g : Y \to Y$ a continuous map, and $h: X \to Y$ a continuous surjective map such that $h \circ f = g \circ h$, then $h_\Top(f) \geq h_\Top(g)$.
\end{enumerate}
\end{thm}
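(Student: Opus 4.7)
The plan is to use the Bowen characterization of topological entropy via $(n,\epsilon)$-separated sets, which makes both statements essentially pushforward/pullback arguments. Recall that for a continuous $f:X \to X$ on a compact metric space, one defines the dynamical metric $d_n^f(x,y) = \max_{0 \leq i < n} d(f^i(x), f^i(y))$, and if $s_f(n,\epsilon)$ denotes the maximum cardinality of an $(n,\epsilon)$-separated set with respect to $d_n^f$, then
\[
h_\Top(f) = \lim_{\epsilon \to 0^+} \limsup_{n \to \infty} \frac{1}{n} \log s_f(n,\epsilon).
\]
My plan is to establish, for both (1) and (2), an inequality of the form $s(n,\epsilon)$-for-the-target $\leq s(n,\delta)$-for-the-source, and then pass to the limit.

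For (1), the closed invariant subset $\Lambda$ inherits the metric $d$ and is compact, so $f|_\Lambda$ is a continuous self-map of a compact metric space and its entropy is well-defined. The key observation is that the Bowen metric $d_n^{f|_\Lambda}$ on $\Lambda$ is literally the restriction of $d_n^f$. Therefore any $(n,\epsilon)$-separated set $E \subset \Lambda$ for $f|_\Lambda$ is also $(n,\epsilon)$-separated in $X$ for $f$, giving $s_{f|_\Lambda}(n,\epsilon) \leq s_f(n,\epsilon)$. Taking $\limsup_n (1/n) \log$ and then $\epsilon \to 0$ yields $h_\Top(f|_\Lambda) \leq h_\Top(f)$.

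For (2), I will use uniform continuity of the semiconjugacy $h$. Since $X$ is compact and $h$ is continuous, for every $\epsilon > 0$ there exists $\delta > 0$ such that $d(x,x') < \delta$ implies $e(h(x), h(x')) < \epsilon$. The intertwining relation $h \circ f = g \circ h$ gives inductively $h \circ f^i = g^i \circ h$. Now let $F \subset Y$ be any $(n,\epsilon)$-separated set for $g$. Using surjectivity of $h$, choose for each $y \in F$ a preimage $\tilde{y} \in h^{-1}(y)$, and let $E$ be the resulting set of preimages. For distinct $y,y' \in F$ there is some $0 \leq i < n$ with $e(g^i(y), g^i(y')) > \epsilon$, i.e., $e(h(f^i(\tilde{y})), h(f^i(\tilde{y}'))) > \epsilon$; by the choice of $\delta$ this forces $d(f^i(\tilde{y}), f^i(\tilde{y}')) \geq \delta$. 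Thus $E$ is $(n,\delta)$-separated for $f$, and $\#E = \#F$ since $h$ is a well-defined function. This gives $s_g(n,\epsilon) \leq s_f(n,\delta)$, and passing to the limits yields $h_\Top(g) \leq h_\Top(f)$.

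The only subtle point, and the closest thing to an obstacle, is in (2): one must be careful that the preimage-choice genuinely produces distinct points (immediate since $h$ is a function) and that the implication runs in the correct direction (the contrapositive of uniform continuity is what transfers separation from $Y$ back to $X$, not the other way around). Everything else is routine manipulation of limits.
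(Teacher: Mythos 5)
Your proof is correct. The paper does not prove this statement at all --- it is quoted verbatim as Proposition 3.1.7 of the cited Hasselblatt--Katok reference --- so there is no in-paper argument to compare against; your argument via $(n,\epsilon)$-separated sets is the standard textbook proof, and both the restriction step in (1) and the contrapositive-of-uniform-continuity step in (2) (including the observation that distinct points of $F$ have distinct chosen preimages) are handled correctly.
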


\noindent Changing the speed of a flow, scales the topological entropy by the same factor.

\begin{thm} [\cite{bowen_entropy_endos}, Proposition 21] \label{top_ent_prop_1}
For any $c \in \mathbb{R}$, $\phi_{ct}$ is again a flow, and $h_\Top(\phi_{ct}) = |c| h_\Top(\phi_t)$.
\end{thm}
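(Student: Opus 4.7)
The plan is to reduce the scaling identity to a direct change-of-variables computation, using the Bowen dynamical-metric characterization of topological entropy rather than the time-$1$ map definition alone. On a compact metric space $(X,d)$, for any continuous flow $\phi_t$ set
\begin{align*}
d_T(x,y) = \sup_{0 \le t \le T} d(\phi_t x, \phi_t y),
\end{align*}
and let $s(T,\epsilon)$ denote the maximum cardinality of a $(T,\epsilon)$-separated set for $d_T$. A standard fact (see \cite{hasselblatt_katok}, \cite{bowen_entropy_endos}) is that
\begin{align*}
h_\Top(\phi_1) = \lim_{\epsilon \to 0^+} \limsup_{T \to \infty} \frac{1}{T} \log s(T, \epsilon).
\end{align*}
Once this reformulation is available, the identity follows almost immediately.

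Assuming first that $c > 0$, I would observe that the dynamical metric of the rescaled flow $\phi_{ct}$ over time interval $[0,T]$ coincides with the dynamical metric of $\phi_t$ over the interval $[0,cT]$, namely
\begin{align*}
\sup_{0 \le t \le T} d(\phi_{ct} x, \phi_{ct} y) = \sup_{0 \le s \le cT} d(\phi_s x, \phi_s y) = d_{cT}(x,y).
\end{align*}
Hence a set is $(T,\epsilon)$-separated for $\phi_{ct}$ precisely when it is $(cT,\epsilon)$-separated for $\phi_t$, so the corresponding separating-set counts satisfy $s^{(c)}(T,\epsilon) = s(cT,\epsilon)$. Substituting $T' = cT$ in the defining limit gives $h_\Top(\phi_{ct}) = c \cdot h_\Top(\phi_t)$.

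The case $c = 0$ is trivial since the rescaled flow is the identity map and therefore has zero topological entropy, matching $0 \cdot h_\Top(\phi_t)$. For $c < 0$, write $\phi_{ct} = \psi_{|c|t}$ where $\psi_s := \phi_{-s}$ is the time-reversed flow, and note that $\psi_1 = \phi_1^{-1}$. The standard fact $h_\Top(f) = h_\Top(f^{-1})$ for a homeomorphism of a compact metric space (itself a direct consequence of the separating-set definition, since applying $f^{T-1}$ bijects $(T,\epsilon)$-separated sets for $f$ with those for $f^{-1}$) then yields $h_\Top(\psi_t) = h_\Top(\phi_t)$. Applying the already-established $c > 0$ case to the flow $\psi$ with scaling factor $|c|$ gives $h_\Top(\phi_{ct}) = h_\Top(\psi_{|c|t}) = |c| \cdot h_\Top(\psi_t) = |c| \cdot h_\Top(\phi_t)$, completing the proof.

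There is no serious obstacle here: the argument is essentially a substitution in the definition, and the only non-trivial input is the characterization of $h_\Top$ of a flow via the dynamical metric, which is precisely the content of the cited reference. The mild care required for $c < 0$ is handled by the inversion-invariance of entropy for homeomorphisms.
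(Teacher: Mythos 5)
Your argument is correct. Note that the paper itself gives no proof of this statement --- it is quoted verbatim from Bowen (\cite{bowen_entropy_endos}, Proposition 21) --- so there is nothing internal to compare against; your change-of-variables computation with the dynamical metrics $d_T$, the substitution $T' = cT$ for $c>0$, and the reduction of $c<0$ to the time-reversed flow via $h_\Top(f)=h_\Top(f^{-1})$ is a clean, self-contained proof in exactly the framework of the cited reference. The only input you take on faith is the identification of $h_\Top(\phi_1)$ with the separated-set growth rate for the continuous-time metric $d_T$; that identification (which requires uniform continuity of $(t,x)\mapsto\phi_t x$ on $[0,1]\times X$) is indeed standard and is established in the sources you cite, so relying on it is appropriate here.
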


\noindent As a corollary, we have the following.

\begin{cor}[\cite{paternain_geod_flows}] \label{changing_energy_geod}
If the geodesic flow on the unit tangent bundle of a compact Riemannian manifold $(M,g)$ has zero (resp. positive) topological entropy, then the geodesic flow restricted to any nonzero energy level has zero (resp. positive) topological entropy.  More precisely, if $\phi_t$ is the geodesic flow of $(M,g)$ and $\psi_t$ is the geodesic flow of $(M,cg)$, then
\begin{align*}
h_\Top(\psi_t) = \frac{h_\Top(\phi_t)}{\sqrt{c}}
\end{align*}
\end{cor}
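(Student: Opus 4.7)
The plan is to exhibit an explicit topological conjugacy between the unit tangent bundle flow $\psi_t$ of $(M,cg)$ and a time-rescaling of the flow $\phi_t$ of $(M,g)$, and then apply the two theorems stated earlier in this section.

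First, I would observe that since the Levi-Civita connection of $cg$ coincides with that of $g$, the two metrics share the same unparametrized geodesics; only the speed of parametrization differs. Concretely, if $\gamma$ is a $g$-unit-speed geodesic, then the reparametrization $\eta(t) = \gamma(t/\sqrt{c})$ has $g$-speed $1/\sqrt{c}$, hence $cg$-speed one, so it is a $cg$-unit-speed geodesic. This suggests defining the diffeomorphism $h : S_gM \to S_{cg}M$ between unit tangent bundles by $h(v) = v/\sqrt{c}$.

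Next I would verify that $h$ intertwines the two flows in the expected way. With $\eta(t) = \gamma(t/\sqrt{c})$ as above, $\dot\eta(t) = \dot\gamma(t/\sqrt{c})/\sqrt{c}$, so
\begin{align*}
\psi_t(h(v)) = \psi_t\bigl(v/\sqrt{c}\bigr) = \phi_{t/\sqrt{c}}(v)/\sqrt{c} = h\bigl(\phi_{t/\sqrt{c}}(v)\bigr),
\end{align*}
i.e. $\psi_t \circ h = h \circ \phi_{t/\sqrt{c}}$. Thus $\psi_t$ is topologically conjugate to the time-rescaled flow $\phi_{t/\sqrt{c}}$.

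Finally, applying Theorem \ref{top_ent_basic_props}(2) to both $h$ and $h^{-1}$ shows that conjugate flows have equal topological entropy, giving $h_\Top(\psi_t) = h_\Top(\phi_{t/\sqrt{c}})$. Invoking Theorem \ref{top_ent_prop_1} with constant $1/\sqrt{c}$ then yields $h_\Top(\phi_{t/\sqrt{c}}) = h_\Top(\phi_t)/\sqrt{c}$, which is the claimed formula; the qualitative statement about zero or positive entropy follows immediately. There is no real obstacle here — the only point requiring care is getting the $\sqrt{c}$ factors right, which comes down to the observation that rescaling the metric by $c$ rescales norms by $\sqrt{c}$.
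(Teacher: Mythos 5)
Your proposal is correct and follows exactly the route the paper intends: the paper states this as an immediate corollary of Theorem \ref{top_ent_prop_1}, with the (implicit) proof being precisely the conjugacy $v \mapsto v/\sqrt{c}$ between the unit tangent bundle flows, which identifies $\psi_t$ with the time-rescaled flow $\phi_{t/\sqrt{c}}$. Your $\sqrt{c}$ bookkeeping is right, so there is nothing to add.
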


\noindent The next two results were proved by Bowen.

\begin{thm}[\cite{bowen_entropy_endos}, Corollary 18] \label{bowen_sup_inv_levels}
Let $(X,d)$ and $(Y,e)$ be compact metric spaces and $f:X \to X$ and $\pi:X \to Y$ be continuous with $\pi \circ f = \pi$.  Then
\begin{align*}
h_\Top(f) = \sup_{y \in Y} h_\Top(f|_{\pi^{-1}(y)})
\end{align*}
\end{thm}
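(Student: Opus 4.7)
The plan is to prove the two inequalities separately. The direction $h_\Top(f) \geq \sup_{y \in Y} h_\Top(f|_{\pi^{-1}(y)})$ is immediate: since $\pi$ is continuous, each fiber $\pi^{-1}(y)$ is closed in $X$, and since $\pi \circ f = \pi$, each fiber is $f$-invariant. Hence Theorem \ref{top_ent_basic_props}(1) applies to give $h_\Top(f) \geq h_\Top(f|_{\pi^{-1}(y)})$ for every $y \in Y$, and taking the supremum yields the inequality.

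For the reverse inequality, I would work with the $(n,\epsilon)$-spanning-set characterization of topological entropy. Let $r_n(\epsilon, K)$ denote the minimal cardinality of an $(n,\epsilon)$-spanning set for a compact $f$-invariant $K$ with respect to the Bowen metric $d_n(x,y) = \max_{0 \leq k < n} d(f^k x, f^k y)$; then $h_\Top(f|_K) = \lim_{\epsilon \to 0} \limsup_{n \to \infty} \frac{1}{n} \log r_n(\epsilon, K)$. Set $h^* = \sup_{y \in Y} h_\Top(f|_{\pi^{-1}(y)})$ and fix $\epsilon, \delta > 0$; the goal is to bound $\limsup_n \frac{1}{n} \log r_n(\epsilon, X)$ by $h^* + \delta$.

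The key construction is as follows. For each $y \in Y$ and $n \in \mathbb{N}$, pick a minimal $(n, \epsilon/2)$-spanning set $E_n(y) \subset \pi^{-1}(y)$, and form the open set
\begin{equation*}
W_n(y) = \bigcup_{e \in E_n(y)} \{ x \in X : d_n(x, e) < \epsilon \},
\end{equation*}
which contains $\pi^{-1}(y)$. Since $X \setminus W_n(y)$ is compact and disjoint from $\pi^{-1}(y)$, the set $V_n(y) := Y \setminus \pi(X \setminus W_n(y))$ is an open neighborhood of $y$ in $Y$ with $\pi^{-1}(V_n(y)) \subset W_n(y)$, so $E_n(y)$ is $(n,\epsilon)$-spanning for $\pi^{-1}(V_n(y))$. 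Covering $Y$ compactly by finitely many $V_n(y_1), \ldots, V_n(y_{m(n)})$ yields
\begin{equation*}
r_n(\epsilon, X) \leq m(n) \cdot \max_i r_n(\epsilon/2, \pi^{-1}(y_i)).
\end{equation*}

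The main obstacle lies in controlling the interplay between $m(n)$ and the fiber spanning numbers: a priori the neighborhoods $V_n(y)$ shrink with $n$, so $m(n)$ could grow exponentially, and even with subexponential $m(n)$ the estimate $\frac{1}{n} \log r_n(\epsilon/2, \pi^{-1}(y_i)) \leq h^* + \delta$ must hold uniformly for the (varying-with-$n$) choices of $y_i$. The heart of Bowen's argument in \cite{bowen_entropy_endos} is an upper-semicontinuity-type estimate for $y \mapsto r_n(\epsilon/2, \pi^{-1}(y))$ together with the compactness of $Y$, which forces $m(n)$ to grow subexponentially and the maximum to be dominated by $e^{n(h^*+\delta)}$. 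Taking $\frac{1}{n} \log$, $\limsup_n$, and finally $\epsilon, \delta \to 0$ then closes the proof.
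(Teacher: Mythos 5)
The paper itself gives no proof of this statement---it is quoted directly as Corollary 18 of Bowen's paper---so there is nothing internal to compare against; what follows assesses your argument on its own terms. Your first inequality is complete and correct: each fiber $\pi^{-1}(y)$ is closed and, because $\pi \circ f = \pi$, forward $f$-invariant, so Theorem~\ref{top_ent_basic_props}(1) gives $h_\Top(f) \geq h_\Top(f|_{\pi^{-1}(y)})$ for every $y$, and the supremum follows.

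The reverse inequality, however, has a genuine gap, and it is precisely the one you flag yourself: in your construction the spanning sets $E_n(y)$, hence the tubes $V_n(y)$ and the covering number $m(n)$, all depend on $n$, you have no control on how $V_n(y)$ shrinks as $n \to \infty$, and invoking ``the heart of Bowen's argument'' is an appeal to the citation rather than a proof. The missing idea that closes the gap is to decouple the tube construction from $n$. Fix $\delta, \epsilon > 0$ and, for each $y$, a \emph{single} time $m_y$ with $r_{m_y}(\epsilon, \pi^{-1}(y)) \leq e^{m_y(h^*+\delta)}$; this is possible because $\limsup_n \frac{1}{n}\log r_n(\epsilon, \pi^{-1}(y)) \leq h_\Top(f|_{\pi^{-1}(y)}) \leq h^*$ for every fixed $\epsilon$. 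Build $W(y)$ and the tube $V(y)$ from this one spanning set $E_{m_y}(y)$. The crucial point, which your version never exploits, is that $\pi \circ f = \pi$ makes $\pi^{-1}(V(y))$ itself $f$-invariant; hence every $x \in \pi^{-1}(V(y))$ has all iterates $f^{jm_y}(x)$ back inside $W(y)$, and coding each $m_y$-block of the orbit of $x$ by an element of $E_{m_y}(y)$ that $\epsilon$-shadows it produces, for every $k$, a $(km_y, 2\epsilon)$-spanning set of $\pi^{-1}(V(y))$ of cardinality at most $e^{km_y(h^*+\delta)}$ (two points with the same code are $2\epsilon$-close in the Bowen metric $d_{km_y}$). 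The finite subcover $V(y_1), \ldots, V(y_p)$ of $Y$ is then chosen once, independently of $n$, giving $r_n(2\epsilon, X) \leq pCe^{n(h^*+\delta)}$ for all $n$ after interpolating $n$ between multiples of the $m_{y_i}$; letting $\epsilon, \delta \to 0$ finishes the argument. Without this block-concatenation step, or some equivalent uniformization, the bound you need on $m(n)\cdot\max_i r_n(\epsilon/2,\pi^{-1}(y_i))$ does not follow from what you wrote.
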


\begin{thm}[\cite{bowen_entropy_endos}, Theorem 19] \label{bowen_quotients}
Let $(X,d)$ be a compact metric space and $f:X \to X$ a continuous map. Let $(G, d)$ be a compact topological group that acts freely.  Let $\pi :X \to Y$ be the orbit map.  If $f$ is $G$-equivariant, and $g:Y \to Y$ is the map induced by $f$, i.e. $g \circ \pi = \pi \circ f$, then
\begin{align*}
h_\Top(f) = h_\Top(g)
\end{align*}
\end{thm}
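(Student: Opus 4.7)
The strategy is to establish both inequalities separately. The direction $h_\Top(f) \geq h_\Top(g)$ is immediate from part (2) of Theorem \ref{top_ent_basic_props} applied to the continuous surjection $\pi: X \to Y$, which satisfies $\pi \circ f = g \circ \pi$. All the real work lies in the reverse inequality $h_\Top(f) \leq h_\Top(g)$, and the guiding idea is that the fibers of $\pi$, being single orbits of the compact group $G$, contribute no exponential complexity to the dynamics of $f$.

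To make this precise, I would first replace the metric $d$ on $X$ by a $G$-invariant metric obtained by averaging against the normalized Haar measure on $G$. This metric induces the same topology on the compact space $X$ and therefore does not change the topological entropy of $f$. It simultaneously induces on $Y = X/G$ the quotient metric $d_Y([x],[x']) = \min_{h \in G} d(x, h \cdot x')$, with respect to which $\pi$ is 1-Lipschitz.

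The key estimate would be a uniform fiber-counting bound: for every $\epsilon > 0$ there is a finite constant $C(\epsilon)$, independent of $n$, such that any $(n,\epsilon)$-separated subset of a single fiber $\pi^{-1}(y)$ has cardinality at most $C(\epsilon)$. Indeed, using that $G$ acts freely and transitively on each fiber, write such a set as $h_1 x_0, \ldots, h_m x_0$; the $G$-equivariance of $f$ together with the $G$-invariance of $d$ yields
\begin{align*}
d(f^k(h_i x_0), f^k(h_j x_0)) = d(f^k(x_0), h_i^{-1} h_j \cdot f^k(x_0)).
\end{align*}
If this quantity is at least $\epsilon$ for some $0 \leq k < n$, then $h_i^{-1} h_j$ lies outside the open set $U_\epsilon := \{h \in G : \sup_{z \in X} d(z, h \cdot z) < \epsilon\}$, which is an open neighborhood of the identity in $G$ by continuity of the action and compactness of $X$. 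Compactness of $G$ then forces the number of pairwise $U_\epsilon$-separated group elements to be bounded by some finite $C(\epsilon)$.

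Finally, I would combine this fiber bound with an $(n,\delta)$-spanning count in $Y$: given an $(n,\epsilon)$-separated $E \subset X$ and an $(n,\delta)$-spanning set $F \subset Y$ for suitably chosen $\delta = \delta(\epsilon)$, each $x \in E$ projects into the $(n,\delta)$-orbit-neighborhood of some $y \in F$, so grouping $E$ accordingly gives $|E| \leq C(\epsilon/2) \cdot |F|$; taking logarithms, dividing by $n$, and passing to the usual limits yields $h_\Top(f) \leq h_\Top(g)$. The main obstacle is precisely this last step: the condition that $\pi(x)$ and $\pi(x')$ are $(n,\delta)$-close in $Y$ only provides, a priori, a sequence of group elements $h_k$ (one per time $0 \leq k < n$) realizing closeness of $f^k(x)$ and $h_k \cdot f^k(x')$, whereas the fiber-counting lemma requires a single fixed $h \in G$ relating the two lifts. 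Resolving this requires exploiting freeness of the action together with the compactness of $G$ to ``straighten'' the time-dependent witnesses into a single group element, at the cost of a controlled loss in the separation constant.
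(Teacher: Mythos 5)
The paper offers no proof of this statement---it is quoted directly from Bowen---so the comparison is against Bowen's own argument, which splits the theorem into two pieces: the inequality $h_\Top(f)\le h_\Top(g)+\sup_{y}h_\Top(f,\pi^{-1}(y))$ for an arbitrary factor map (his Theorem~17), and the fact that the fiber term vanishes for a free action of a compact group. Your averaging of the metric, and your fiber-counting lemma (any $(n,\epsilon)$-separated subset of a single fiber has cardinality at most $C(\epsilon)$, via the open neighborhood $U_\epsilon$ of the identity and total boundedness of $G$) are correct and constitute a clean proof of the second piece; the easy direction $h_\Top(f)\ge h_\Top(g)$ is also fine.

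The gap is in the final combination step, and it is exactly the obstacle you flag in your last sentence: the ``straightening'' of the time-dependent witnesses $h_k$ into a single group element $h$ with only a controlled loss in the separation constant is \emph{impossible} in general. Take $X=Y\times G$ with $G=S^1$ acting on the second factor, $g$ an irrational rotation of $Y=S^1$, and $f(y,h)=(g(y),h+\beta(y))$ for a Lipschitz cocycle $\beta$. If $|y-y'|<\delta$ then $\pi$-images are $(n,\delta)$-close for all $n$, and the witness at time $k$ is $h_k=B_k(y)-B_k(y')$ with $B_k=\sum_{i<k}\beta\circ g^i$; this drift is only bounded by $kL\delta$, so over a time window of length $n\gg 1/(L\delta)$ it wraps around all of $G$ and no single $h$ keeps $f^k(x)$ and $h\cdot f^k(x')$ within $\epsilon$ for all $k<n$. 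Consequently the asserted bound $|E|\le C(\epsilon/2)\,|F|$ fails: the number of $(n,\epsilon)$-separated points lying over a single $(n,\delta)$-indistinguishable class in $Y$ can grow with $n$ (in this example, polynomially). The theorem survives because that growth is subexponential, but capturing this requires Bowen's block argument: cover $Y$ by finitely many open sets $W_{y_i}$ over which a fixed finite set $(m(y_i),\epsilon)$-spans $\pi^{-1}(W_{y_i})$, subdivide $\{0,\dots,n-1\}$ into blocks, and restart the fiber-spanning estimate at the beginning of each block so that the drift never accumulates beyond one block. Without this (or an equivalent device, e.g.\ the variational principle plus the Abramov--Rokhlin formula), the proof is incomplete.
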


\section{Magnetic Flows with Zero Topological Entropy} \label{sec:mag flows with zero ent}

\noindent Let $G$ be a simply connected 2-step nilpotent Lie group, $g$ a left-invariant metric on $G$, $\Gamma < G$ a cocompact discrete subgroup, and $\sigma$ a left-invariant magnetic field that vanishes on the derived algebra and represents a rational cohomology class.

\bigskip

\noindent Butler shows that the geodesic flow of any two-step nilmanifold has zero topological entropy.  More precisely, 
\begin{thm}[\cite{butler_wild}, Theorem 1.4] \label{butler_zero_ent}
Let $K$ be a connected, simply connected two-step nilpotent Lie group, and $\Gamma \leq K$ be a discrete, cocompact subgroup of $K$.  If $g$ is a left-invariant metric on $K$ and $\Phi_t$ is the geodesic flow induced by $g$ on $T^*(\Gamma \backslash K)$, then
\begin{align*}
h_\Top(\Phi_t) = 0
\end{align*}
\end{thm}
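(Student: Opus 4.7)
The plan is to combine the explicit solvability of the geodesic equations on a 2-step nilpotent Lie group with the polynomial growth of the cocompact lattice $\Gamma$, and to deduce zero topological entropy from polynomial separation of orbits.

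I would begin by left-trivializing $T^*K \cong K \times \mathfrak{k}^*$ and decomposing $\mathfrak{k} = \mathfrak{v} \oplus \mathfrak{z}$ into the center $\mathfrak{z}$ and its $g$-orthogonal complement $\mathfrak{v}$. The 2-step hypothesis $[\mathfrak{k},\mathfrak{k}] \subset \mathfrak{z}$ causes the Euler-Arnold equations for the body momentum $\xi(t) \in \mathfrak{k}^*$ to simplify dramatically: the $\mathfrak{z}^*$-component $\xi_\mathfrak{z}$ is a first integral (since $\mathfrak{z}$ is central), and the $\mathfrak{v}^*$-component $\xi_\mathfrak{v}$ satisfies a linear skew-symmetric ODE with constant coefficients determined by $\xi_\mathfrak{z}$. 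In particular $\xi(t)$ is quasi-periodic with bounded amplitude on every energy level.

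Next I would integrate the left-invariant field specified by $\xi(t)$ to recover the geodesic $\gamma(t) \in K$. Because Baker-Campbell-Hausdorff truncates at one commutator in the 2-step setting, $u(t) := \log \gamma(t)$ satisfies an ODE whose right-hand side is polynomial of degree at most one in $u$, and can be integrated explicitly: $u_\mathfrak{v}(t)$ is the primitive of a quasi-periodic function (linear growth plus bounded oscillations), while $u_\mathfrak{z}(t)$ involves a further integration against $[u_\mathfrak{v}, \dot u_\mathfrak{v}]$ and hence grows at most quadratically. Consequently, two geodesics whose initial data in $T^*K$ differ by $\delta$ remain $O(\delta(1+t^2))$-close in $T^*K$ for all $t \geq 0$.

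To pass to the compact quotient, for each $\epsilon > 0$ I would estimate the maximum size $s(T,\epsilon)$ of an $(\epsilon,T)$-separated set for the time-$1$ map of $\Phi_t$ on the unit cotangent bundle of $\Gamma \backslash K$. The polynomial spreading bound shows that the Bowen ball centered at any point contains a ball of Riemannian radius $\asymp \epsilon/T^2$ in $T^*K$. After projecting to $\Gamma \backslash K$ and accounting for the polynomial growth of $\Gamma$ as a finitely generated nilpotent group, the quotient Bowen ball has volume bounded below by a negative power of $T$. Dividing the total volume of the unit cotangent bundle of $\Gamma \backslash K$ by this lower bound gives $s(T,\epsilon) \lesssim \text{poly}(T)$, whence $h_\Top(\Phi_t) = \lim_{\epsilon \to 0} \limsup_{T \to \infty} T^{-1} \log s(T,\epsilon) = 0$.

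The main obstacle is keeping the polynomial constants in the spreading bound uniform over the unit sphere of $\mathfrak{k}^*$, and in particular preventing blow-up as $\xi_\mathfrak{z}$ degenerates to zero. This is handled by observing that the limit $\xi_\mathfrak{z} = 0$ corresponds to geodesics which are one-parameter subgroups of $K$ with no center drift at all, so the degeneration actually occurs in the most controlled regime and the Bowen-ball volume bound only improves there. A subsidiary nuisance is the bookkeeping with deck translates in the quotient, which is absorbed into the standard polynomial growth function of the nilpotent lattice $\Gamma$.
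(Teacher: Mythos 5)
The paper never proves this statement: it is imported verbatim from Butler \cite{butler_wild} and used as a black box, so there is no internal argument to compare yours against. Your sketch is a self-contained direct proof, and its skeleton is sound: for a left-invariant metric on a $2$-step group the Euler--Arnold equation does decouple as you say (the central momentum $\xi_{\mathfrak z}$ is conserved and $\xi_{\mathfrak v}$ solves a constant-coefficient skew-symmetric linear system, hence stays bounded), Baker--Campbell--Hausdorff truncates at one bracket, $\log\gamma(t)$ grows polynomially, and polynomial separation of nearby orbits kills the entropy via the standard separated-set count, since $\tfrac1n\log P(n)\to 0$ for any polynomial $P$. Two points in your write-up need repair, though neither is fatal. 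First, the bound $O(\delta(1+t^2))$ is too optimistic: perturbing $\xi_{\mathfrak z}$ changes the rotation rate of $\xi_{\mathfrak v}$, so already the momentum spreads linearly in $t$, each subsequent integration adds a power of $t$, and comparing $d_K(\gamma(t),\gamma'(t))$ with $\|\log\gamma(t)-\log\gamma'(t)\|$ costs a further factor because right translation and conjugation are not isometries of a left-invariant metric (the distortion is controlled by $\mathrm{Ad}_{\gamma(t)^{-1}}=\mathrm{Id}-\mathrm{ad}_{\log\gamma(t)}$, i.e.\ by another polynomial factor). Any polynomial degree suffices for the conclusion, so state the estimate as ``polynomial in $t$, uniformly over the unit sphere of $\mathfrak k^*$'' rather than committing to degree two. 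Second, the appeal to the polynomial growth of $\Gamma$ is a red herring: the covering projection is $1$-Lipschitz, so the local bound $d(\Phi_n x,\Phi_n y)\le P(n)\,d(x,y)$ descends to the quotient whenever $d(x,y)$ is below the injectivity radius, which is all the counting argument requires; no bookkeeping of deck translates is needed. What your route buys is an elementary, quantitative proof via polynomial orbit complexity; what the citation buys the paper is brevity, and Butler's own treatment additionally extracts the (wild) integrability structure of these flows, which a pure growth estimate does not see.
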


\noindent Using this result we can construct magnetic systems on 2-step nilmanifolds with zero topological entropy.

\begin{proof}[Proof of Theorem \ref{main thm zero ent}]
Use the magnetic field $\sigma$ to form the Lie group $\widetilde{G}$ and cocompact discrete subgroup $\widetilde{\Gamma} < \widetilde{G}$, as described in Section \ref{sec:ext_Lie_alg}.  Since $\sigma$ vanishes on the derived algebra, $\widetilde{G}$ is a 2-step nilpotent Lie group.  Let $\widetilde{H} : T^*(\widetilde{\Gamma} \backslash \widetilde{G}) \to \mathbb{R}$ be the metric Hamiltonian, and let $\widetilde{\phi}_t$ denote the geodesic flow on $T^*(\widetilde{\Gamma} \backslash \widetilde{G})$.

\bigskip

\noindent Consider the set
\begin{align*}
\Lambda := \Lambda_{c,1} = \widetilde{H}^{-1}\bigg( \frac{1+c^2}{2} \bigg) \cap \mu^{-1}(c) \subset T^*(\widetilde{\Gamma} \backslash \widetilde{G})
\end{align*}
as defined in equation \eqref{eq:def of the invariant set}.  Theorem \ref{butler_zero_ent} implies that the geodesic flow $\widetilde{\phi}_t$ has zero topological entropy.  By Corollary \ref{changing_energy_geod}, the topological entropy restricted to any energy level in $T^*(\widetilde{\Gamma} \backslash \widetilde{G})$ vanishes.  In particular, $h_\Top(\widetilde{\phi}_t|_{\widetilde{H}^{-1}((1+c^2)/2)}) = 0$.  Let $\bar{\phi}_t$ be the flow induced by the geodesic flow on $\Lambda /S^1$.  Since $S^1$ is a compact group acting on $\Lambda$, and $\Lambda$ is an invariant set,
\begin{align*}
h_\Top(\bar{\phi}_t) = h_\Top(\widetilde{\phi}_t|_\Lambda) \leq h_\Top(\widetilde{\phi}_t|_{\widetilde{H}^{-1}((1+c^2)/2)}) = 0
\end{align*}
where the first equality follows from Theorem \ref{bowen_quotients} and the inequality follows from Theorem \ref{top_ent_basic_props}.  To finish the proof take $c = 1$ and note that, by Theorem \ref{symp_quot_id_with_mag_system}, the flow $\bar{\phi}_t$ on $\Lambda / S^1$ is conjugate to the flow of the magnetic system $(\Gamma \backslash G, g, \sigma)$ on $H^{-1}(1/2)$.

\end{proof}

\noindent We now prove Corollary \ref{main thm zero ent cor}.

\begin{proof}[Proof of Corollary \ref{main thm zero ent cor}]
As noted in Section \ref{sec:ext_Lie_alg}, the Lie algebra cohomology \linebreak $H^*(\mathfrak{g}; \mathbb{R})$ is isomorphic to de Rahm cohomolgy  $H^*(\Gamma \backslash G; \mathbb{R})$.  If $d$ is the boundary map for Lie algebra cohomology, $\tau \in \Lambda^1(\mathfrak{g}) \simeq \mathfrak{g}^*$ is a linear functional, then for any $X \in \mathfrak{g}$ and any $D \in [\mathfrak{g}, \mathfrak{g}]$,
\begin{align*}
d\tau(X,D) = - \tau([X,D]) = 0
\end{align*}
Hence any exact left-invariant magnetic field must vanish on the derived algebra, and the corollary follows from Theorem \ref{main thm zero ent}.
\end{proof}

\section{Example of a Magnetic Flow with Positive Topological Entropy} \label{sec:mag flow with pos ent}

\noindent In this section, we show that there exists a magnetic flow on a 2-step nilmanifold with positive topological entropy.  The essential difference from the situation in Theorem \ref{main thm zero ent} is that the magnetic field does \emph{not} vanish on the derived algebra.  Hence the associated Lie group $\widetilde{G}$ is now 3-step.  

\subsection{Reduction of Hamiltonian flow to Euler flow} \label{subsec:ham to euler}

\noindent This section follows the exposition of Section 2.1 in \cite{but_gel}. Recall that for any Lie group $G$ and any left-invariant Hamiltonian $H:T^*G \to \mathbb{R}$, the Hamiltonian flow projects to a Hamiltonian flow on the Poisson manifold $\mathfrak{g}^*$ as follows. First, the dual to a Lie algebra, $\mathfrak{g}^*$ is naturally a Poisson manifold as follows.  For any $f \in C^\infty(\mathfrak{g}^*)$, the value of $df$ at a point $\lambda \in \mathfrak{g}^*$ can be thought of as a vector in the Lie algebra $\mathfrak{g}$ via the sequence of identifications $df_\lambda \in T^*_\lambda \mathfrak{g}^* = (\mathfrak{g}^*)^* = \mathfrak{g}$.  The Poisson bracket for $f,k \in C^\infty(\mathfrak{g}^*)$ is now defined as
\begin{align*}
\{f,k\}(\lambda) = - \lambda([df_\lambda, dk_\lambda])
\end{align*}
Given $f \in C^\infty(\mathfrak{g}^*)$, then $E_f(\cdot) = \{ \cdot , f \}$ is a vector field on $\mathfrak{g}^*$, called the Euler vector field $E_f$ associated to $f$.  Using the identifications $T^*_\lambda \mathfrak{g}^* = (\mathfrak{g}^*)^* = \mathfrak{g}$ and $T_\lambda \mathfrak{g}^* = \mathfrak{g}^*$, and the natural pairing $\langle \cdot, \cdot \rangle$ between $\mathfrak{g}$ and $\mathfrak{g}^*$, we have for any function $\varphi \in C^\infty(\mathfrak{g}^*)$
\begin{align*}
\langle d\varphi_\lambda, (E_f)_\lambda \rangle &= \{ \varphi, f \}(\lambda) = - \langle \lambda, [d\varphi_\lambda, df_\lambda] \rangle = \langle \lambda \circ \ad_{df_\lambda}, d\varphi_\lambda \rangle
\end{align*}
Since the function $\varphi$ was arbitrary, we get the explicit expression for the Euler vector field
\begin{align} \label{eqn:ham_vf_on_lie_dual}
(E_f)_\lambda = \lambda \circ \ad_{df_\lambda}
\end{align}

\bigskip

\noindent Next, trivialize the cotangent bundle $T^*G \simeq G \times \mathfrak{g}^*$ via left-invariant 1-forms, $\lambda \in T_g^*G \mapsto (g, L_g^*\lambda)$.  The tangent bundle of $T^*G$ is now identified with the tangent bundle of $G \times \mathfrak{g}^*$, by
\begin{align*}
T(T^*G) = T(G \times \mathfrak{g}^*) = (G \times \mathfrak{g}^*) \times (\mathfrak{g} \oplus \mathfrak{g}^*)
\end{align*}
where the identification $T_gG = \mathfrak{g}$ via left-invariant vector fields is implicit.  With these conventions, the Liouville 1-form is given by
\begin{align*}
\theta_{(g,\lambda)}(X,\alpha) = \lambda(X)
\end{align*} 
and the symplectic form is
\begin{align*}
\omega_{(g,\lambda)}((X,\alpha),(Y,\beta)) &= -d\theta_{(g,\lambda)}((X,\alpha),(Y,\beta)) = \beta(X) - \alpha(Y) + \lambda([X,Y])
\end{align*}
Let $r:G \times \mathfrak{g}^* \to \mathfrak{g}^*$ be projection onto the second factor, $f \in C^\infty(\mathfrak{g}^*)$, and $r^*f = f \circ r$ its pullback to the cotangent bundle.  One can calculate now that the Hamiltonian vector field of $r^*f$ is given by
\begin{align} \label{eqn:hamiltonian_vf_cot_bund}
X_{r^*f}(g,\lambda) = (df_\lambda, \lambda \circ \ad_{df_\lambda})
\end{align}

\bigskip

\noindent In conclusion, we have

\begin{claim}[cf. \cite{but_gel}, Section 2.1]
For any $f \in C^\infty(\mathfrak{g}^*)$, the map $r : G \times \mathfrak{g}^* \to \mathfrak{g}^*$ projects the Hamiltonian flow of $r^*f$ onto the Euler flow of $f$. 
\end{claim}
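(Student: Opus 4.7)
The plan is to reduce the claim about flows to a pointwise identity of vector fields, namely $dr_{(g,\lambda)}\bigl(X_{r^*f}(g,\lambda)\bigr) = (E_f)_\lambda$ for every $(g,\lambda) \in G \times \mathfrak{g}^*$. Once this is established, the statement about flows follows from the standard fact that if a smooth map intertwines vector fields, it carries integral curves to integral curves.

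First, I would read off the differential of $r$. In the trivialization $T(G \times \mathfrak{g}^*) = (G \times \mathfrak{g}^*) \times (\mathfrak{g} \oplus \mathfrak{g}^*)$ used in the excerpt, projection onto the second factor gives $dr_{(g,\lambda)}(X,\alpha) = \alpha$. Plugging in the expression for the Hamiltonian vector field recorded in the text, $X_{r^*f}(g,\lambda) = (df_\lambda,\, \lambda \circ \ad_{df_\lambda})$, produces $\lambda \circ \ad_{df_\lambda}$, which is exactly $(E_f)_\lambda$ by the formula for the Euler vector field derived just before. Thus the claim is reduced to verifying the formula for $X_{r^*f}$.

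For that verification, I would write $X_{r^*f}(g,\lambda) = (Y,\beta) \in \mathfrak{g} \oplus \mathfrak{g}^*$ and impose the defining condition $\omega_{(g,\lambda)}(X_{r^*f}, V) = d(r^*f)_{(g,\lambda)}(V)$ for an arbitrary $V = (X,\alpha)$. Using the formula
\begin{equation*}
\omega_{(g,\lambda)}\bigl((Y,\beta),(X,\alpha)\bigr) = \alpha(Y) - \beta(X) + \lambda([Y,X]),
\end{equation*}
and noting that since $r^*f = f \circ r$ depends only on the $\mathfrak{g}^*$-factor one has $d(r^*f)_{(g,\lambda)}(X,\alpha) = \alpha(df_\lambda)$ (with $df_\lambda$ regarded as an element of $\mathfrak{g}$ under $T^*_\lambda \mathfrak{g}^* = (\mathfrak{g}^*)^* = \mathfrak{g}$), equating the two expressions and comparing coefficients of $\alpha$ and of $X$ separately forces $Y = df_\lambda$ and $\beta(X) = \lambda([df_\lambda, X]) = (\lambda \circ \ad_{df_\lambda})(X)$.

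I do not anticipate a serious obstacle: the claim is essentially a repackaging of the two formulas already derived in the text. The only point requiring care is bookkeeping the identifications $T_g G \cong \mathfrak{g}$ (via left-translation), $T_\lambda \mathfrak{g}^* \cong \mathfrak{g}^*$, and $T^*_\lambda \mathfrak{g}^* \cong \mathfrak{g}$, together with the sign conventions $\omega = -d\theta$ and $\{f,k\}(\lambda) = -\lambda([df_\lambda, dk_\lambda])$; a mistaken sign at any stage produces $-E_f$ in place of $E_f$.
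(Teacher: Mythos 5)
Your proposal is correct and follows essentially the same route as the paper, whose entire proof is ``combine equations \eqref{eqn:hamiltonian_vf_cot_bund} and \eqref{eqn:ham_vf_on_lie_dual}'' --- i.e., project the displayed formula for $X_{r^*f}$ onto the second factor and recognize the Euler vector field. The only difference is that you also verify the formula for $X_{r^*f}$, which the paper asserts with ``one can calculate''; your verification is consistent with the paper's sign conventions.
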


\begin{proof}
Combine equations \eqref{eqn:hamiltonian_vf_cot_bund} and \eqref{eqn:ham_vf_on_lie_dual}.
\end{proof}

\noindent As a particular case, we have the following the corollary.

\begin{cor} \label{geod flow projects to euler flow}
The geodesic flow of any left-invariant metric on $G$ projects to an Euler flow on $\mathfrak{g}^*$.
\end{cor}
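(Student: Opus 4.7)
The plan is to realize the geodesic flow as the Hamiltonian flow of a function on $T^*G$ of the form $r^*f$ and then quote the preceding claim. The left-invariant metric $g$ on $G$ induces an inner product on $\mathfrak{g}$, hence by duality an inner product on $\mathfrak{g}^*$, which I will denote $\langle \cdot, \cdot \rangle_{\mathfrak{g}^*}$. Define $f \in C^\infty(\mathfrak{g}^*)$ by $f(\lambda) = \tfrac{1}{2} \langle \lambda, \lambda \rangle_{\mathfrak{g}^*}$.

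The first step is to verify that under the left-trivialization $T^*G \simeq G \times \mathfrak{g}^*$ introduced in Section~\ref{subsec:ham to euler}, the metric Hamiltonian $H(x, \eta) = \tfrac{1}{2} g_x(\eta, \eta)$ pulls back to the function $(g, \lambda) \mapsto \tfrac{1}{2}\langle \lambda, \lambda\rangle_{\mathfrak{g}^*}$. This is immediate from the fact that left-translation is an isometry of $g$: if $\eta \in T_g^*G$ corresponds under the trivialization to $\lambda = L_g^*\eta \in \mathfrak{g}^*$, then $g_g(\eta, \eta) = \langle L_g^*\eta, L_g^*\eta\rangle_{\mathfrak{g}^*} = \langle \lambda, \lambda\rangle_{\mathfrak{g}^*}$. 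Consequently the trivialized Hamiltonian is exactly $r^*f$, depending only on the $\mathfrak{g}^*$-factor.

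The second step is to apply the claim directly: since the geodesic flow on $T^*G$ is by definition the Hamiltonian flow of $H = r^*f$ (with respect to the canonical symplectic form, whose trivialized expression is recorded in the preceding discussion), the projection $r$ sends this flow to the Euler flow of $f$ on $\mathfrak{g}^*$. This concludes the corollary.

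There is no genuine obstacle here; the only point that requires a moment's attention is confirming that left-invariance of the metric translates into the Hamiltonian being a pullback under $r$, and this is purely a matter of unwinding the left-trivialization conventions fixed at the start of Section~\ref{subsec:ham to euler}. The proof can therefore be stated in essentially one line once $f$ is exhibited.
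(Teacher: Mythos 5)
Your proposal is correct and follows the same route as the paper: exhibit $f(\lambda) = \tfrac{1}{2}\langle\lambda,\lambda\rangle$ on $\mathfrak{g}^*$, observe that left-invariance makes the metric Hamiltonian equal to $r^*f$ in the left-trivialization, and invoke the preceding claim. The paper's own proof is exactly this, stated even more tersely.
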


\begin{proof}
Any inner product $\langle \cdot, \cdot \rangle$ on $\mathfrak{g}^*$ determines a left-invariant metric on $G$.  If $f$ is the function on $\mathfrak{g}^*$ given by $f(\lambda) = (1/2)\langle \lambda, \lambda \rangle$, then $r^*f$ is the Hamiltonian that yields the geodesic flow of the left-invariant metric on $T^*G$.
\end{proof}

\subsection{Horseshoes}

For each natural number $N \geq 2$ define the space of bi-infinite sequences on $N$ symbols as
\begin{align*}
\Omega_N = \{ \omega = (\ldots, \omega_{-1}, \omega_0, \omega_1, \ldots ) \ | \ \omega_i \in \{0,1,\ldots,N-1 \} \text{ for } i \in \mathbb{Z} \}
\end{align*}
For any fixed $\lambda > 1$, define a metric on this set by
\begin{align*}
d_\lambda(\omega, \omega') = \sum_{n = -\infty}^\infty \frac{|\omega_n - \omega'_n|}{\lambda^{|n|}}
\end{align*}
The topology induced by this metric on $\Omega_N$ does not depend on the choice of $\lambda$.  Define the left-shift $\sigma_N : \Omega_N \to \Omega_N$ by
\begin{align*}
\sigma_N(\omega) = \omega' = (\ldots, \omega'_0, \omega'_1, \ldots)
\end{align*}
where $\omega'_i = \omega_{i+1}$.  It is a homeomorphism.  Let $A = (a_{ij})_{i,j = 0}^{N-1}$ be any matrix whose entries are either $0$ or $1$.  Then one can form the subset
\begin{align*}
\Omega_A = \{ \omega \in \Omega_N \ | \ a_{\omega_n \omega_{n+1}} = 1 \text{ for }n \in \mathbb{Z} \}
\end{align*}
In other words, the matrix $A$ determines the allowable transitions.  When $A$ is the matrix of all $1$'s, $\Omega_A = \Omega_N$.  Further, the set $\Omega_A$ is shift invariant, so we can consider the restriction $\sigma_A = \sigma_N|_{\Omega_A}$, called a topological Markov chain, or a subshift of finite type.  A matrix $A$ is called transitive if for some positive $m$, all the entries of $A^m$ are positive.  Recall that a dynamical system is called topologically transitive if there exists a dense orbit.  A topological Markov chain $\sigma_A$ is called transitive if $A$ is a transitive matrix.  The terminology is motivated by the following proposition:

\begin{prop}[\cite{hasselblatt_katok}, Proposition 1.9.9]
If $A$ is a transitive matrix, then the topological Markov chain $\sigma_A$ is topologically mixing and its periodic orbits are dense in $\Omega_A$.
\end{prop}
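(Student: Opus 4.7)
The plan is to use the cylinder sets as a basis for the topology of $\Omega_A$. For each allowed word $(i_{-k}, \ldots, i_k)$ (meaning $a_{i_j i_{j+1}} = 1$ for all $-k \le j < k$), the cylinder
\begin{align*}
C(i_{-k},\ldots,i_k) = \{ \omega \in \Omega_A : \omega_j = i_j \text{ for } -k \le j \le k \}
\end{align*}
is open, and every open set contains such a cylinder. Since $d_\lambda(\omega,\omega') \le 2 \sum_{|n| > k} \lambda^{-|n|}$ when $\omega$ and $\omega'$ agree on $[-k,k]$, making $k$ large shrinks the cylinders to arbitrarily small diameter. So both statements reduce to claims about allowed finite words.

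The first preliminary step is to upgrade transitivity from a single exponent to all large exponents: I will check that $(A^{m'})_{ab} > 0$ for every $m' \ge m$ and every pair $(a,b)$. Transitivity forces $A$ to have no identically zero row or column (otherwise some symbol would be unreachable or lead nowhere in $m$ steps), so the inductive step $A^{m'+1} = A \cdot A^{m'}$ preserves positivity. This is the workhorse for both parts.

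For topological mixing, given two nonempty basic open cylinders $U = C(i_{-k},\ldots,i_k)$ and $V = C(j_{-k},\ldots,j_k)$, a point $\omega \in \sigma_A^{-n}(V) \cap U$ amounts to an admissible bi-infinite sequence whose coordinates on $[-k,k]$ spell out the $i$-word and whose coordinates on $[n-k,n+k]$ spell out the $j$-word. For $n \ge m + 2k$ the number of intermediate transitions $n - 2k$ is at least $m$, so by the previous step there exists an admissible finite word from $i_k$ to $j_{-k}$ of exactly that length; patching together the $i$-word, the bridging word, and the $j$-word, and then extending to a full bi-infinite sequence using any outgoing (resp.\ incoming) edges on each side, produces the required point in $\sigma_A^{-n}(V) \cap U$.

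For density of periodic orbits, given $\omega \in \Omega_A$ and a cylinder $C(\omega_{-k},\ldots,\omega_k)$ around it, I will produce a periodic point in this cylinder. By transitivity, there is an admissible word of length $m$ from $\omega_k$ back to $\omega_{-k}$; concatenating $(\omega_{-k},\ldots,\omega_k)$ with this return word gives a finite admissible cyclic word whose infinite periodic repetition lies in $\Omega_A$ and agrees with $\omega$ on $[-k,k]$.

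The only real subtlety is the first step, promoting positivity of $A^m$ to positivity of $A^{m'}$ for all $m' \ge m$; once this is in hand, both assertions reduce to direct combinatorial constructions on admissible words, and no further obstacle appears.
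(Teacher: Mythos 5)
Your proof is correct. Note, however, that the paper does not prove this proposition at all --- it is imported verbatim as Proposition 1.9.9 of the cited Katok--Hasselblatt book, so there is no in-paper argument to compare against; what you have written is a self-contained proof of the quoted result, and it is essentially the standard textbook argument. The three ingredients all check out: positivity of $A^m$ rules out zero rows and columns, which both drives the induction showing $A^{m'}$ is entrywise positive for all $m' \ge m$ and guarantees that any finite admissible word extends to a bi-infinite one; the mixing argument correctly counts $n-2k$ intermediate transitions and bridges with a path supplied by positivity of $A^{n-2k}$ once $n \ge m+2k$; and the periodic-orbit construction closes up an admissible cyclic word of period $2k+m$ whose transitions (including the wrap-around from $c_{m-1}$ back to $\omega_{-k}$) are all allowed. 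The reduction to cylinder sets is legitimate since they form a neighborhood basis of arbitrarily small diameter in the metric $d_\lambda$.
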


\begin{cor} \label{trans_of_horseshoe}
If $A$ is a transitive matrix, then the topological Markov chain $\sigma_A$ is topologically transitive.
\end{cor}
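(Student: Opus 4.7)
The plan is to derive topological transitivity directly from topological mixing, which is already handed to us by the previous proposition. Since a transitive matrix $A$ gives a topologically mixing $\sigma_A$, the task reduces to showing: topological mixing on $\Omega_A$ implies the existence of a dense orbit (this is the definition of topologically transitive the paper recalled).

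First, I would unfold the mixing hypothesis into the standard open-set formulation: for every pair of nonempty open sets $U, V \subset \Omega_A$, there is some $n_0$ such that $\sigma_A^n(U) \cap V \neq \emptyset$ for all $n \geq n_0$. In particular, for every such pair there is at least one $n$ with $\sigma_A^{-n}(V) \cap U \neq \emptyset$, which means the set $\bigcup_{n \geq 0} \sigma_A^{-n}(V)$ meets every nonempty open set $U$, i.e.\ it is dense in $\Omega_A$. It is obviously open as a union of open sets.

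Next, I would invoke the Baire category theorem. The space $\Omega_A$, equipped with the metric $d_\lambda$, is a compact metric space and hence a Baire space. It has a countable basis; fix one, $\{V_i\}_{i \in \mathbb{N}}$. For each $i$, the set
\begin{equation*}
W_i = \bigcup_{n \geq 0} \sigma_A^{-n}(V_i)
\end{equation*}
is open and dense by the previous paragraph. By Baire, the intersection $\bigcap_i W_i$ is nonempty. Any point $\omega$ in this intersection has $\sigma_A^n(\omega) \in V_i$ for some $n$ and every $i$, so the forward orbit of $\omega$ meets every basic open set, hence is dense in $\Omega_A$. This produces the required dense orbit, so $\sigma_A$ is topologically transitive.

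The argument is entirely standard and there is no real obstacle; the only thing to be careful about is matching the paper's definition of topological transitivity (existence of a dense orbit) to what mixing directly provides (open sets eventually intersect), and bridging the two via Baire category on the compact metric space $\Omega_A$.
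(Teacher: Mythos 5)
Your argument is correct and follows essentially the same route as the paper: both pass from topological mixing to the open-set formulation of transitivity and from there to a dense orbit. The only difference is that the paper outsources the step ``open-set transitivity implies a dense orbit on a locally compact separable metric space'' to Lemma 1.4.2 of \cite{hasselblatt_katok}, whereas you supply its standard Baire-category proof in full.
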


\begin{proof}
The space $\Omega_A$ is a locally compact, seperable, metric space.  Thus by Lemma 1.4.2 of \cite{hasselblatt_katok}, $\sigma_A$ is topologically transitive if and only if it topologically mixing.
\end{proof}

\noindent Robinson gives the following definition of a horseshoe (pg 287-288 of \cite{robinson}).  A diffeomorphism $f$ is said to have a horseshoe, if there exists an invariant set $\Theta$ (or subsystem) for which $f|_\Theta$ is topologically conjugate to the shift map $\sigma_A$ on a transitive two-sided subshift of finite type on $n$ symbols.

\bigskip

\noindent In order to be able to describe the same sort of chaotic dynamics for flows, we need to extend definitions slightly.  Given a metric space $X$, form the product $X \times \mathbb{R}$ and define a flow $\varphi_t(x,s) = (x,s+t)$.  Given a homeomorphism $f:X \to X$ and a continuous function $\tau: X \to (0,\infty)$, define an equivalence relation on $X \times \mathbb{R}$ by $(x,s+\tau(x)) \sim (f(x),s)$.  Finally, the flow descends to a flow $\varphi^\tau_t$ on $\hat{X} = (X \times \mathbb{R}) / \sim$.  In the case that $X = \Omega_A$ and $f = \sigma_A$ for some transitive $A$, this system is called a suspended horseshoe.  A flow which possesses an invariant set that is topologically conjugate to $\varphi^\tau_t$ is said to have a suspended horseshoe.

\bigskip

\begin{lemma} \label{trans_of_suspension}
Let $f:X \to X$ be a topologically transitive homeomorphism of a metric space.  Then for any continuous $\tau : X \to (0, \infty)$, the time-variable suspension flow $\varphi^\tau_t : \hat{X} \to \hat{X}$ is also topologically transitive.
\end{lemma}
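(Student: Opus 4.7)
The plan is to verify the two-open-set formulation of topological transitivity for $\varphi^\tau_t$: for any two non-empty open sets $U_1, U_2 \subset \hat{X}$, produce $t \in \mathbb{R}$ and a point $p \in U_1$ with $\varphi^\tau_t(p) \in U_2$. The strategy is simply that topological transitivity of $f$ on the base $X$ propagates to the suspension, with the fibre coordinate providing the freedom to land at the correct ``height'' $s_2$.

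First I would describe a convenient basis for the quotient topology on $\hat{X}$. Let $q : X \times \mathbb{R} \to \hat{X}$ denote the quotient map. Using positivity and continuity of $\tau$, for any $x \in X$ and any $s \in (0, \tau(x))$ there is an open neighborhood $V \ni x$ and an $\epsilon > 0$ with $0 < s - \epsilon$ and $s + \epsilon < \inf_V \tau$. On such a box $V \times (s - \epsilon, s + \epsilon)$ the quotient $q$ is injective and its image is open in $\hat{X}$. Shrinking $U_1, U_2$ if necessary, I may assume each $U_i$ contains a box $B_i = q(V_i \times (a_i, b_i))$ with $0 < a_i < b_i < \inf_{V_i} \tau$ and $V_i \subset X$ open.

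Next I would apply topological transitivity of $f$ to the non-empty open sets $V_1, V_2 \subset X$: there exists $n \ge 0$ such that $W := V_1 \cap f^{-n}(V_2)$ is non-empty. Choose any $x' \in W$, any $s_1 \in (a_1, b_1)$ and any $s_2 \in (a_2, b_2)$, and set
\[
T(x') = \sum_{j=0}^{n-1} \tau(f^j(x')).
\]
Unfolding the definition of $\varphi^\tau_t$, the trajectory through $q(x', s_1)$ reaches $q(f(x'), 0)$ at time $\tau(x') - s_1$, and in general reaches $q(f^n(x'), 0)$ at time $T(x') - s_1$. Flowing a further $s_2$ units gives $\varphi^\tau_{T(x') - s_1 + s_2}\bigl(q(x', s_1)\bigr) = q(f^n(x'), s_2)$. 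Since $f^n(x') \in V_2$ and $s_2 \in (0, \inf_{V_2} \tau)$, this point lies in $B_2 \subset U_2$, while the starting point lies in $B_1 \subset U_1$. This exhibits $t = T(x') - s_1 + s_2$ as the required time.

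The main obstacle is purely bookkeeping with the quotient topology: one must choose the basic boxes small enough that $q$ restricts to a homeomorphism, and ensure that after $n$ iterations of $f$ the height $s_2$ still lies in the fundamental domain over the target, i.e. $s_2 < \tau(f^n(x'))$. Both are handled by the single requirement $b_i < \inf_{V_i} \tau$ arranged in the first step, so no further subtleties arise. If one additionally wants $t > 0$ (e.g.\ for a semiflow formulation), one replaces $n$ by any larger integer for which $V_1 \cap f^{-n}(V_2)$ remains non-empty; topological transitivity of $f$ on an infinite metric space ensures such $n$ exists.
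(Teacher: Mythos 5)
Your argument is built around the two-open-set formulation of transitivity (for all nonempty open $U_1,U_2\subset\hat X$ there is a $t$ with $\varphi^\tau_t(U_1)\cap U_2\neq\emptyset$), whereas the paper defines topological transitivity as the existence of a dense orbit and its proof works directly with that definition: it takes the point $x\in X$ whose $f$-orbit is dense and shows, via the same unfolding identity $\varphi^\tau_{T_1+T_2}[(x,0)]=[(f^n(x),T_2)]$ that you use, that the single flow orbit of $[(x,0)]$ meets every nonempty open subset of $\hat X$. This is more than a stylistic difference. For an arbitrary metric space (which is the stated generality of the lemma) the two notions are not equivalent: passing from the open-set condition back to a dense orbit requires a Baire-category/second-countability argument, which you do not supply and which is not available in the lemma's stated generality. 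And the dense-orbit form is what is actually consumed downstream: the proof of Lemma~\ref{horseshoe contained in energy sphere} explicitly fixes a point of $\Theta$ with a dense orbit. So as written your proof establishes a formally weaker conclusion than the one the paper needs, even though in the eventual application ($X=\Omega_A$ compact metric) the two notions coincide by the same Lemma~1.4.2 of Katok--Hasselblatt invoked in Corollary~\ref{trans_of_horseshoe}.

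A second, smaller problem: from a dense orbit of a homeomorphism you may conclude $V_1\cap f^{-n}(V_2)\neq\emptyset$ only for some $n\in\mathbb{Z}$, not for some $n\geq 0$; your closing claim that transitivity on an infinite metric space yields arbitrarily large such $n$ is false (translation on $\mathbb{Z}$, with $V_1,V_2$ singletons, is a counterexample). This is harmless for a flow, since negative times are allowed and your unfolding computation adapts to $n<0$, but the justification you give is wrong and should be replaced by simply permitting $n\in\mathbb{Z}$ and $t\in\mathbb{R}$. The cleanest fix overall is to do what the paper does: push the dense $f$-orbit of a single point $x$ forward to show that $[(x,0)]$ has dense $\varphi^\tau$-orbit; this avoids both issues and dispenses with the careful construction of basis boxes, since one only needs that every nonempty open $U\subset\hat X$ contains $q(V\times W)$ for some open rectangle $V\times W\subset q^{-1}(U)$.
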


\begin{proof}
Since $f$ is topologically transitive, there exists $x \in X$ such that the orbit of $x$ under $f$ is dense.  Let $U \subset \hat{X}$ be any open set, and let $q: X \times \mathbb{R} \to \hat{X}$ be the quotient map.  Since open rectangles are a base for the topology on $X \times \mathbb{R}$, we can find an open rectangle $V \times W \subset q^{-1}(U)$. By the density of the orbit of $x$, there exists a natural number $n$ such that $f^n(x) \in V$.  Set $T_1 = \sum_{i=0}^{n-1} \tau(f^i(x))$ and fix $T_2 \in W$. If $[ (x,t) ]$ denotes the equivalence class of $(x,t)$ in the quotient $\hat{X}$, then we have the identity
\begin{align*}
\varphi^\tau_{\tau(f^i(x))}[(f^i(x),0)] &= [\varphi_{\tau(f^i(x))}(f^i(x),0)]  \\
&= [(f^i(x),\tau(f^i(x)))] \\
&= [(f^{i+1}(x), 0)]
\end{align*}
Using this identity
\begin{align*}
\varphi^\tau_{T_1 + T_2}[(x,0)] &= \varphi^\tau_{T_2} \circ \varphi^\tau_{\tau(f^{n-1}(x))} \circ \cdots \circ \varphi^\tau_{\tau(x)}[(x,0)] \\
&= \varphi^\tau_{T_2}[(f^n(x),0)] \\
&= [(f^n(x),T_2)] \in q(V \times W) \subset U
\end{align*}
\end{proof}

\noindent A consequence of Lemma \ref{trans_of_suspension} is the following.

\begin{lemma} \label{horseshoe contained in energy sphere}
Let $\varphi_t : X \to X$ be a flow possessing a suspended horseshoe $\Theta \subset X$, and suppose that $\pi: X \to \mathbb{R}$ is a continuous function invariant under the flow, $\pi \circ \varphi_t = \pi$.  Then $\Theta \subset \pi^{-1}(c)$ for some $c \in \mathbb{R}$.
\end{lemma}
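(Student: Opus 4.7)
The plan is to exploit topological transitivity: a continuous flow-invariant function must be constant on the closure of any dense orbit, so it suffices to produce one dense orbit inside $\Theta$. By hypothesis, $\varphi_t|_\Theta$ is topologically conjugate to a suspension flow $\varphi^\tau_t$ on some $\hat{\Omega}_A$ built from a transitive subshift of finite type $\sigma_A : \Omega_A \to \Omega_A$. Corollary \ref{trans_of_horseshoe} says $\sigma_A$ is topologically transitive, and then Lemma \ref{trans_of_suspension} promotes this to topological transitivity of $\varphi^\tau_t$. Transporting back via the conjugacy, $\varphi_t|_\Theta$ is topologically transitive.

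Next I would pick a point $x \in \Theta$ whose forward/bi-infinite orbit $O(x) = \{\varphi_t(x) : t \in \mathbb{R}\}$ is dense in $\Theta$; such a point exists since for a continuous flow on a separable metric space topological transitivity is equivalent to the existence of a dense orbit (this is the analogue of Lemma 1.4.2 of \cite{hasselblatt_katok} invoked in the proof of Corollary \ref{trans_of_horseshoe}). Because $\pi \circ \varphi_t = \pi$, the function $\pi$ is constant on $O(x)$; call its value $c := \pi(x)$. By continuity of $\pi$, the preimage $\pi^{-1}(c)$ is closed, and since $O(x) \subset \pi^{-1}(c)$ we obtain $\Theta = \overline{O(x)} \subset \pi^{-1}(c)$, which is the desired conclusion.

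The argument is essentially two lines once the ingredients are assembled, so I do not expect a serious obstacle; the only subtlety is the passage from topological transitivity of the abstract suspension to existence of a dense orbit in $\Theta$, which relies on $\Theta$ (equivalently $\hat{\Omega}_A$) being a separable metric space so that the Birkhoff transitivity theorem applies. This is automatic here since $\Omega_A$ is a compact metric space and suspensions of compact metric spaces by bounded, strictly positive roof functions remain compact metric (and in particular second countable), so the conjugacy yields a dense orbit in $\Theta$ as required.
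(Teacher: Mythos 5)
Your proof is correct and follows essentially the same route as the paper: invoke Corollary \ref{trans_of_horseshoe} and Lemma \ref{trans_of_suspension} (together with invariance of transitivity under conjugacy) to get a dense orbit in $\Theta$, then observe that the flow-invariant continuous function $\pi$ is constant on that orbit and hence on its closure $\Theta$. The only difference is that your worry about passing from transitivity to a dense orbit is unnecessary here, since the paper defines topological transitivity as the existence of a dense orbit (and Lemma \ref{trans_of_suspension} in fact exhibits one directly).
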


\begin{proof}
By Corollary \ref{trans_of_horseshoe} and Lemma \ref{trans_of_suspension}, and the fact that topological transitivity is an invariant of topological conjugacy, $\varphi_t |_\Lambda$ is topologically transitive.  Fix $x \in \Theta$ such that $x$ has a dense orbit in $\Theta$ and set $c = \pi(x)$.  The lemma now follows. 
\end{proof}

\subsection{The Example} \label{subsec:the example}

\noindent Let $\mathfrak{g} = \Span_\mathbb{R} \{ U,V,X,Y,Z \}$.  Define a Lie bracket on the basis vectors by $[X,Y] = Z$ and $[Y,V] = U$ with all other pairs being zero, and extend $[\cdot, \cdot]$ by linearity to all of $\mathfrak{g}$.  Let $G$ be the simply connected Lie group with $\mathfrak{g} = \Lie(G)$, and endow $G$ with the left-invariant metric, $g$, such that $\{ U,V,X,Y,Z \}$ is an orthonormal basis for $T_e G$.  For magnetic field, let $\sigma$ be the left-invariant 2-form on $G$ such that at the identity $\sigma(X,U) = \sigma(Z,V) = 1$.  Fix any cocompact discrete subgroup $\Gamma < G$ such that $\sigma$ represents a rational cohomology class with respect to $\Gamma$.  After performing the extension procedure outlined in Section \ref{sec:ext_Lie_alg}, note that $\widetilde{G}$ and $\widetilde{\mathfrak{g}}$ are the same as $T_4$ and $\mathfrak{t}_4$ from \cite{butler_scattering}. 

\bigskip

\noindent Following the notation and terminology of \cite{butler_scattering}, introduce coordinates $( p_U, p_V, p_W,\allowbreak p_X, p_Y, p_Z )$ on $\widetilde{\mathfrak{g}}^*$ where $p_A(\lambda) = \lambda(A)$ for $A \in \{ U,V,W,X,Y,Z \}$.  Note that $p_W \circ r(x,\lambda) = \lambda(W) = \mu(x,\lambda)$, where $r: \widetilde{G} \times \widetilde{\mathfrak{g}}^* \to \widetilde{\mathfrak{g}}^*$ is the projection map introduced in Section \ref{subsec:ham to euler}, $W$ is the distinguished vector, and $\mu: T^*\widetilde{G} \to \mathbb{R}$ is the moment map introduced in Section \ref{sec:ext_Lie_alg}.  The functions $K_1 = p_W$ and $K_2 = p_W p_Y - p_Z p_U$ are the two independent Casimirs of $\widetilde{\mathfrak{g}}^*$, and the level sets of the function $K = (K_1,K_2) : \widetilde{\mathfrak{g}}^* \to \mathbb{R}^2$ are the coadjoint orbits. Let $\mathcal{O}_{(k_1,k_2)}$ be the coadjoint orbit $K^{-1}(k_1,k_2)$.  A coadjoint orbit $\mathcal{O}_{(k_1,k_2)}$ will be called regular if $k_1 k_2 \neq 0$. Let $\widetilde{h}: \widetilde{g}^* \to \mathbb{R}$, defined by $\widetilde{h} \circ r = \widetilde{H}$, where $\widetilde{H}: \widetilde{\Gamma} \backslash \widetilde{G} \times \widetilde{\mathfrak{g}}^* \to \mathbb{R}$ is the induced metric Hamiltonian, and $r$ is projection onto the second factor.  Then $\widetilde{h}$ is precisely the Hamiltonian defined in Equation (5) of Section 2.3 of \cite{butler_scattering} with $a_{ij} = 2$ for $1 \leq i < j \leq 4$. Finally, define $\alpha \in \mathbb{C}$ by
\begin{align} \label{eqn:defn of butlers alpha invariant}
\alpha = \sqrt{\frac{k_2 - k_1^2}{k_2 + k_1^2}}
\end{align}
which depends only on the coadjoint orbit.  This is the quantity $\alpha$ in Equation (7) of \cite{butler_scattering} for the specific Hamiltonian $\widetilde{h}$. We now conclude that
\begin{thm}[\cite{butler_scattering}, Theorem 1.2] \label{butler mainthm from scattering}
Let $\mathcal{O}_{(k_1,k_2)} \subset \widetilde{\mathfrak{g}}^*$ be a coadjoint orbit for which $\alpha \in \mathbb{R}$, $\alpha \neq 0$ (as defined in \eqref{eqn:defn of butlers alpha invariant}).  Then the Euler vector field restricted to $\mathcal{O}_{(k_1,k_2)}$ has a horseshoe.
\end{thm}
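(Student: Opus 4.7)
The plan is to establish a transverse homoclinic intersection for the Euler flow on the 4-dimensional coadjoint orbit $\mathcal{O}_{(k_1,k_2)}$ (equipped with its Kirillov-Kostant-Souriau symplectic form) and then invoke the Smale-Birkhoff homoclinic theorem together with the suspension construction used in Section \ref{sec:mag flow with pos ent} to produce a horseshoe. First I would write out the Euler equations
\begin{align*}
\dot{p} = p \circ \ad_{d\widetilde{h}_p}
\end{align*}
explicitly in the coordinates $(p_U, p_V, p_W, p_X, p_Y, p_Z)$ using the extended brackets $\{X,Y\}=Z$, $\{Y,V\}=U$, $\{X,U\}=W$, $\{Z,V\}=W$ of $\widetilde{\mathfrak{g}}$. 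Fixing the Casimirs $K_1=k_1$ and $K_2=k_2$ then reduces these six ODEs to a 2-degree-of-freedom Hamiltonian system on $\mathcal{O}_{(k_1,k_2)}$.

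Next I would locate a hyperbolic equilibrium of the reduced system. Equilibria correspond to critical points of $\widetilde{h}$ restricted to $\mathcal{O}_{(k_1,k_2)}$, obtainable via Lagrange multipliers with respect to $K_1$ and $K_2$. Linearizing the Hamiltonian vector field at such a critical point gives a $4\times 4$ matrix whose nonzero eigenvalues occur in pairs $\pm\mu$; an explicit computation using the specific coefficients $a_{ij}=2$ appearing in $\widetilde{h}$ should identify $\mu$ as proportional to the invariant $\alpha$ of \eqref{eqn:defn of butlers alpha invariant}. The hypothesis $\alpha \in \mathbb{R} \setminus \{0\}$ is then precisely the statement that this equilibrium is hyperbolic (rather than elliptic or degenerate), yielding well-defined 2-dimensional stable and unstable manifolds $W^s$ and $W^u$.

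The main obstacle is to verify that $W^s$ and $W^u$ meet transversally along a homoclinic orbit. My plan is to identify an invariant submanifold through the equilibrium on which the dynamics inherits additional integrability from the lower-step ideal structure of $\widetilde{\mathfrak{g}}$, so that in this unperturbed picture $W^s$ and $W^u$ coincide along a separatrix; the remaining transverse dynamics is then treated as a perturbation. A Melnikov-type integral along the unperturbed separatrix measures the splitting of $W^s$ and $W^u$, and the heart of the argument is to evaluate this integral in closed form and verify that it is nonzero whenever $\alpha$ is real and nonzero. Granting the existence of a transverse homoclinic point, the Smale-Birkhoff homoclinic theorem furnishes a Poincaré return map on a transverse section possessing an invariant hyperbolic set conjugate to the shift on a transitive two-sided subshift of finite type; suspending this map in the sense of Lemma \ref{trans_of_suspension} produces a suspended horseshoe embedded in the Euler flow on $\mathcal{O}_{(k_1,k_2)}$.
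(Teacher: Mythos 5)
The first thing to note is that the paper does not prove this statement at all: Theorem \ref{butler mainthm from scattering} is imported wholesale from \cite{butler_scattering} (Theorem 1.2 there), with only the remark that its formulation has been adapted to the notation of Section \ref{subsec:the example}. So there is no internal proof to compare against; what you have written is a proposal for reproving Butler's theorem from scratch. Your overall skeleton --- write the Euler equations $\dot{p} = p \circ \ad_{d\widetilde{h}_p}$ on $\widetilde{\mathfrak{g}}^*$ using the brackets $\{X,Y\}=Z$, $\{Y,V\}=U$, $\{X,U\}=W$, $\{Z,V\}=W$, restrict to a regular coadjoint orbit to get a two-degree-of-freedom system, locate a hyperbolic set whose hyperbolicity is governed by $\alpha$, produce a transverse homoclinic intersection, and finish with Smale--Birkhoff plus suspension --- is the standard and essentially correct architecture, and it is close in spirit to Butler's actual argument.

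The genuine gap is in the central transversality step. A Melnikov integral measures the first-order splitting of separatrices for a system of the form ``integrable plus $\varepsilon$ times a perturbation,'' and its nonvanishing yields transverse intersection of $W^s$ and $W^u$ only for $\varepsilon$ small and nonzero, via the Poincar\'e--Melnikov error estimates. Here there is no small parameter: $\widetilde{h}$ and $\mathcal{O}_{(k_1,k_2)}$ are fixed, and the part of the dynamics you propose to ``treat as a perturbation'' is of the same order as the rest. Declaring it a perturbation does not make the error estimates available, so even an explicitly computed nonzero Melnikov-type integral would not by itself prove transversality; you would need either an honest small parameter (e.g.\ a degenerating family of coadjoint orbits with uniform control of the remainder) or a non-perturbative argument. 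Butler's proof is of the latter kind: it analyzes the asymptotics of solutions along the invariant manifolds directly and shows the resulting scattering map is nontrivial, which forces transverse intersections for every admissible $\alpha$, not just in a perturbative regime. A secondary unverified claim is that the linearization at the relevant critical point of $\widetilde{h}|_{\mathcal{O}_{(k_1,k_2)}}$ has exponents proportional to $\alpha$ and that $\alpha \in \mathbb{R}\setminus\{0\}$ is \emph{precisely} hyperbolicity; this is plausible but must actually be checked against the specific coefficients $a_{ij}=2$, since a sign error there turns hyperbolic into elliptic and the whole construction collapses.
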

\noindent Although the statement of Theorem \ref{butler mainthm from scattering} is slightly different than that of Theorem 1.2 in \cite{butler_scattering}, this is what the paper proves and will be necessary below.  We now use this to prove Theorem \ref{main thm pos ent}.

\begin{proof}[Proof of Theorem \ref{main thm pos ent}]
\noindent Fix $k_1 = 1$.  For any $D > 1$, let $k_2 > 0$ such that $\sqrt{2k_2 -1} = D$.  Now $\alpha$ is real and positive, so, by Theorem \ref{butler mainthm from scattering}, $\mathcal{O}_{(1,k_2)}$ contains a horseshoe, $\Theta$. By Lemma \ref{horseshoe contained in energy sphere}, there exists a constant $b \in \mathbb{R}$ such that $\Theta \subset \widetilde{h}^{-1}(b)$.  Note also that $\widetilde{h} \geq K_2$, so $b \geq k_2$, and that $K_1$ induces the moment map $\mu$, i.e. $K_1 \circ r = \mu$.

\bigskip

\noindent Consider the set $\Lambda_{1,d}$, defined in \eqref{eq:def of the invariant set}, where $d = \sqrt{2b - 1}$.  For any $\lambda \in \Theta$, let $(x,\lambda) \in r^{-1}(\lambda)$.  Then
\begin{align*}
\lambda \in \Theta 
\Longrightarrow
\begin{array}{l}
 K_1(\lambda) = 1 \\
 \widetilde{h}(\lambda) = b
\end{array}  
\Longrightarrow
\begin{array}{l}
\mu(x,\lambda) = 1 \\
\widetilde{H}(x,\lambda) = b
\end{array} 
\Longrightarrow
(x,\lambda) \in \Lambda_{1,d}
\end{align*}
Hence $\Theta \subset r(\Lambda_{1,d})$.  The presence of a horseshoe in the invariant set $r(\Lambda_{1,d})$ implies that the Euler flow of $\widetilde{h}$ restricted to $ r(\Lambda_{1,d})$ has positive topological entropy. Since $r$ projects the Hamiltonian flow $\widetilde{H}$ onto the Euler flow of $\widetilde{h}$, we have by part (2) of Theorem \ref{top_ent_basic_props} that $h_\Top(\widetilde{\phi}_t) > 0$ where $\widetilde{\phi}_t$ is the restriction of the geodesic flow to the invariant set $\Lambda_{1,d}$.

\bigskip

\noindent Let $\psi_t$ be the magnetic flow of $(\Gamma \backslash G,g,\sigma)$ on the energy level $H^{-1}(d^2/2) \subset T^*(\Gamma \backslash G)$, and let $\phi_t$ be the flow induced by $\widetilde{\phi}_t$ on the symplectically reduced space $\mu^{-1}(1)/S^1$.  By Theorem \ref{bowen_quotients} the induced flow has the same topological entropy as $\widetilde{\phi}_t$, and by Theorem \ref{symp_quot_id_with_mag_system}, $\psi_t$ and $\phi_t$ are topologically conjugate.  Thus
\begin{align*}
h_\Top(\psi_t) = h_\Top(\phi_t) = h_\Top(\widetilde{\phi}_t) > 0
\end{align*}
Lastly, note that
\begin{align*}
d = \sqrt{2b - 1} \geq \sqrt{2k_2 - 1} = D
\end{align*}
Since $d^2 / 2$ is the energy level of the magnetic flow, and $D$ was arbitrary, this proves the theorem.
\end{proof}

\noindent {\bf Remark.} Suppose that $\gamma$ is a magnetic geodesic for the system $(M,g,\sigma)$ with $|\gamma'(t)| \equiv d$. Then $\gamma(t/d)$ is a unit speed magnetic geodesic for the system $(M,g,\allowbreak(1/d)\sigma)$.  So Theorem \ref{main thm pos ent} can be rephrased as saying the topological entropy of $(\Gamma \backslash G, g, c\sigma)$ on the unit cotangent bundle has positive topological entropy for arbitrarily small values of $c$.  However, for $c = 0$, the topological entropy vanishes \cite{butler_wild}.

\section*{Acknowledgments}

\noindent The author would like to thank Carolyn Gordon for all her help and guidance throughout the research process and the preparation of this paper.  The author is also grateful to Leo Butler for comments on an earlier version of this paper.

\bibliography{symp_red_on_2-step}{}
\bibliographystyle{amsplain}

\end{document}